\newtheorem{theorem}{Theorem}
\newtheorem{proposition}[theorem]{Proposition}
\newtheorem{corollary}[theorem]{Corollary}
\newtheorem{lemma}[theorem]{Lemma}
\newtheorem{claim}[theorem]{Claim}
\newtheorem{question}[theorem]{Question}
\theoremstyle{remark}
\newtheorem{observation}[theorem]{Observation}
\newtheorem{definition}[theorem]{Definition}
\newtheorem{fact}[theorem]{Fact}
\begin{document}

\title{Deforming ideal solid tori}
\author[F. Gu\'eritaud]{Fran\c{c}ois Gu\'eritaud}
\date{May 2009}

\begin{abstract}
We describe the deformation space of a solid torus with boundary modelled on convex ideal hyperbolic polyhedra. This deformation space is given by natural Gauss--Bonnet type inequalities on the dihedral angles. The result extends to solid tori with an arbitrary conical singularity along the core.  Our method is to decompose the torus into tetrahedra and maximize a volume functional.
\end{abstract}

\maketitle

\section{Introduction}

There are several interesting ways to parametrize the moduli space of hyperbolic objects (subsets of $\mathbb{H}^3$ or quotients thereof) by geometric data associated to their boundary. For Kleinian surface--group manifolds $M=\mathbb{H}^3/\pi_1(S)$, this boundary data can be a conformal structure on the union of two copies of the surface $S$, as in the Ahlfors--Bers theorem (then $S\sqcup S \simeq \partial_\infty M$) and its enhancement, the Ending Lamination Classification (where the data can partially or completely ``degenerate'' to projective measured laminations on $S$). For geometrically finite surface--group manifolds $M$, Thurston also proposed his Bending Lamination Conjecture (BLC): this time, the data is the two measured bending laminations seen on the boundary of the convex core of $M$. See \cite{bonahon-otal} for the best known results in that direction.

Bending laminations are a generalization of dihedral angles. Thus, a question parallel to BLC is to ask whether the (exterior) dihedral angles $\alpha_1, \dots, \alpha_n$ of a convex hyperbolic polyhedron $P$ parametrize the deformation space of $P$. For compact $P$, the conditions on the $\alpha_i$ are complicated, non-linear even for the tetrahedron, and in general unknown (Andreev's theorem \cite{andreev} gives the deformation space, but only when all angles are acute). In contrast, for ideal and hyper-ideal $P$, one can prove \cite{bonahon-bao} there is only a finite set of linear constraints on the $\alpha_i$ (with second member), thus identifying the deformation space of $P$ with an affine polytope in some $\mathbb{R}^n$. This subject was inaugurated by Rivin \cite{rivin-1} and has been picked up by many authors since (see \cite{luo} and the references therein).

Let us focus on ideal $P$. Rivin's constraints stipulate that 
\begin{enumerate}[(I)]
\item the sum of the $\alpha_i$ associated to the edges around each vertex must be $2\pi$; 
\item the sum of the $\alpha_i$ associated to the edges of $P$ crossed by any transverse (non-backtracking) path $\gamma \subset \partial P$ must be larger than $2\pi$. 
\end{enumerate}
Since $\gamma$ bounds a disk, (II) mimics the Gauss--Bonnet inequality on the angles of a hyperbolic polygon (homeomorphic to the disk).

It is thus tempting to give the following definition: a \emph{topological polyhedron} is a pair $(M,G)$ where $M$ is a compact $3$-manifold with boundary and $G=(V,E)$ a non--empty graph embedded in $\partial M$, with vertex set $V$ and edge set $E$. An \emph{ideal realization} of $(M,G)$ (or in short: of $M$) is a complete hyperbolic manifold $M'$ with ideal polyhedral boundary (i.e. modelled on the intersection of two half--spaces of $\mathbb{H}^3$ and having finite volume), endowed with a homeomorphism to $M\smallsetminus V$ that sends the non--smooth points of $\partial M'$ precisely to the interior points of the edges of $G$.

Clearly, there may be topological obstructions to the existence of ideal realizations of $(M,G)$: for example, when $M$ is a ball, it is a theorem of Steiner that $G$ should be \emph{$3$--connected} for there to exist a convex polyhedron with $G$ as its $1$--skeleton. In general, $M$ must at least be assumed irreducible and atoroidal. Nevertheless, we can ask the natural question: 

\begin{question}
If the topological polyhedron $(M,G)$ has an ideal realization, is the space of such realizations $M'$ parametrized by the dihedral angles of $M'$, subject to Rivin's (linear) conditions?
\end{question}

Here, Rivin's conditions are the same as for the polyhedron case ($M\simeq\mathbb{B}^3$), except that only \emph{disk-bounding} curves $\gamma$ must be considered in (II).

In unpublished work \cite{schlenker}, Schlenker was able to answer this question in the affirmative when $\partial M$ is incompressible in $M$, via a sophisticated argument relying on the theorem of domain invariance. It seems the difficulty for the general case is to prove a certain \emph{connectivity} property of the union of all candidate deformations spaces $E_G$ when the graph $G$ varies. 

The aim of the present paper is to treat the case where $M$ is a solid torus, i.e. the simplest compressible case. Moreover, by slightly modifying (II), we will allow the core geodesic of $M$ to become a singular curve of fixed cone angle $K\in \mathbb{R}^+$ (degenerating to a rank-two cusp for $K=0$). The metric on $M$ will be Riemannian at the core precisely when $K=2\pi$. Our method is a generalization of \cite{gueritaud-polyhedra}, where we treated the case $M=\mathbb{B}^3$. Namely, $M$ is decomposed into ideal tetrahedra $\Delta_1, \dots, \Delta_m$ for which one finds positive dihedral angles $(\theta_1,\dots,\theta_{3m})$ satisfying a natural system of constraints $\Sigma$ (depending on the dihedral angles $\alpha_i$ of $M$). On the affine polytope $\Theta$ of such positive angle assignments $(\theta_1,\dots,\theta_{3m})$, the \emph{volume functional} $\mathcal{V}:\Theta\rightarrow \mathbb{R}^+$ turns out to have a unique critical point, which yields the hyperbolic metric on $M$ (i.e. the realization $M'$).

This approach was suggested by Rivin in \cite{rivin-1} and carried out independently in \cite{rivin-optimization} and \cite{gueritaud-polyhedra} in the (nonsingular) ball case, where the tetrahedra $\Delta_j$ are obtained by coning off all faces of $M$, triangulated if necessary, to one vertex. Here, we will use a \emph{spun} triangulation: in the infinite cyclic cover $\widetilde{M}$ of the solid torus $M$, which has two points at infinity $p^+$ and $p^-$, we cone all faces of $\widetilde{M}$ to $p^+$, and project the $\pi_1(M)$-equivariant result back to $M$. In the geometric realization, the tips of the tetrahedra $\Delta_j$ will thus \emph{spin} along the core geodesic of $M$.

The main difficulty, in the ball case as well as here, is to prove that $\Theta$ is not empty --- in other words, that the linear system $\Sigma$ has a \emph{positive} solution precisely when Rivin's conditions (I) -- (II) are satisfied. Here is an outline of the strategy: in the affine space $\widetilde{\Theta}$ of all (not necessarily positive) solutions $(\theta_1,\dots, \theta_{3m})$ to $\Sigma$, consider a point $\theta$ that \emph{minimizes} $\frac{1}{2}\sum_j|\theta_j|-\theta_j$, the sum of the negative parts of the $\theta_j$. If $\theta$ has a negative coordinate $\theta_j$, minimality will imply the presence of another negative coordinate living nearby in $\partial M$, and hence another, etc. Eventually this must produce a loop in $\partial M$, but Rivin's condition (II) says something about such loops, and we will get a contradiction. The increased difficulty for the solid torus comes from the fact that not \emph{all} loops are subject to (II) (i.e. bound disks): this forces us to study more closely exactly which $\theta_j$'s can become negative simultaneously and how the deformations in $T\widetilde{\Theta}$ may bring them back into $\mathbb{R}^{>0}$.

The paper is organized as follows. In Section \ref{statement}, we give all definitions and state the main theorem. In Section \ref{linear} we write out a linear program $\Sigma$, describe the full space $\widetilde{\Theta}$ of real (not necessarily positive) solutions using linear algebra from \cite{neumann-zagier}, and explain why finding a positive solution will imply the main theorem --- using facts about the volume functional from \cite{rivin-1}. In Section \ref{stokes} we give a ``discrete Stokes formula'' for graphs in $\partial M$. This is used in Section \ref{nonnegative} to work out strong topological constraints on solutions $(\theta_j)_{1\leq j \leq 3m}$: in particular, the nonpositive coordinates of the minimizing solution $\theta$ will define a \emph{train track} on the torus $\partial M$. Our description of $\widetilde{\Theta}$ in terms of curves in $\partial M$, coupled with a normalization result (Lemma \ref{traintracks}) for such curves with respect to train tracks, then allows us to conclude.

A very special case of the main result (with the graph $G\subset \partial M$ having only one vertex and three edges, and $K=2\pi$) was used and proved in Section 1 of \cite{dehn}, using the same approach as here.

\section{The main result}\label{statement}
Let $M$ be a solid torus, and consider a simplicial decomposition $\mathcal{O}=(F,E,V)$ of the boundary $\partial M$ into $m$ triangular Faces, $\frac{3}{2}m$ Edges and $\frac{m}{2}$ Vertices, with $m\geq 2$ even (since the torus $\partial M$ has Euler characteristic zero, all simplicial decompositions are of that type). Let $K \geq 0$ be a real number.

An embedded closed curve $\gamma \subset \partial M$ is called a transverse path if $\gamma$ is smooth, transverse to the $1$-skeleton of $\partial M$, and for every open (triangular) face $f\in F$ of $\partial M$, each connected component of $f\cap \gamma$ connects distinct edges of $f$.

Given a map $\alpha:E\rightarrow [0,\pi)$ and a transverse path $\gamma$, we call $\alpha$-weight of $\gamma$ the number $W^{\alpha}(\gamma):=\sum_{i=1}^k\alpha(e_i)$, where $e_1,\dots, e_k \in E$ is the full cyclic list of edges of $\partial M$ crossed by $\gamma$ (with possible repeats). The map $\alpha$ is called an (exterior) dihedral angle map with singularity $K$ if for each transverse path $\gamma$, the following ``Rivin--like'' conditions are satisfied:

\begin{equation} \label{alpha}
\left \{
\begin{array}{lll}
\text{(i)}&W^\alpha(\gamma)>0;& \\
\text{(ii)}&W^\alpha(\gamma)=2\pi & \text{if $\gamma$ is the loop around a vertex of $\partial M$;} \\
\text{(iii)}&W^\alpha(\gamma)>2\pi & \text{if $\gamma$ bounds a boundary--parallel disk $D$ in $M$} \\ && \text{but is not the loop around a vertex;} \\
\text{(iv)}&W^\alpha(\gamma)>K & \text{if $\gamma$ bounds a compression disk of $M$.} \\
\end{array}
\right .
\end{equation}

Note that any disk-bounding curve $\gamma$ falls either into (ii), (iii) or (iv).
Note also that (i) is equivalent to the following: the edges $\alpha^{-1}(0,\pi)$, i.e. those with nonzero weight, separate the multiply-punctured torus $\partial M \smallsetminus V$ into \emph{contractible} polygons. Thus, we can simulate nontriangular faces by setting certain angles $\alpha(e)$ to $0$ while preserving condition (i).

\begin{theorem} \label{main}
There exists an ideal realization $M_\alpha$ of $M$ with exterior dihedral angles $\alpha(e_i)$, and a conical singularity of angle $K$ along the core, if and only if $\alpha$ satisfies (\ref{alpha}). Moreover, $M_\alpha$ is then unique up to isometry, and the bijection $\alpha \mapsto M_\alpha$ between dihedral angle maps and the space of ideal realizations of $M$ (or moduli space of $M$) is a homeomorphism. 
\end{theorem}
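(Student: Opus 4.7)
Necessity of \eqref{alpha} is standard and can be verified directly on any ideal realization: (i) and (ii) are Rivin's link condition at each ideal vertex of $\partial M$, while (iii) and (iv) are combinatorial Gauss--Bonnet inequalities applied to the totally geodesic disk inside $M_\alpha$ bounded by a lift of $\gamma$ (the $-K$ contribution in (iv) comes from the cone singularity along the core, which the compression disk crosses). Uniqueness of $M_\alpha$ will be a byproduct of the strict concavity of the volume functional used to construct it.

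For sufficiency I follow the outline of the introduction. Section~\ref{linear} decomposes $M$ into $m$ ideal tetrahedra $\Delta_1,\dots,\Delta_m$ by the spun triangulation, parametrizes their shapes by $3m$ dihedral angles $(\theta_1,\dots,\theta_{3m})$, and writes out a linear system $\Sigma$ whose right-hand sides encode the prescribed boundary angles $\alpha(e_i)$, the core cone angle $K$, and the \cite{neumann-zagier} gluing relations. Once the polytope
\begin{equation*}
\Theta\;=\;\widetilde\Theta\cap\mathbb{R}^{3m}_{>0}
\end{equation*}
of positive solutions is shown to be nonempty, Rivin's theorem~\cite{rivin-1} implies that the strictly concave volume functional $\mathcal{V}$ has a unique interior critical point, at which the tetrahedra are geometric and glue up consistently into an ideal realization $M_\alpha$. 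Uniqueness of $M_\alpha$ and continuity of $\alpha\mapsto M_\alpha$ then follow from the strict concavity and properness of $\mathcal{V}$ combined with the fact that $\alpha\mapsto\Theta(\alpha)$ varies affinely; continuity of the inverse map is clear since dihedral angles depend smoothly on the hyperbolic structure.

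The hard part is therefore to show $\Theta\ne\emptyset$ under the assumption \eqref{alpha}. My plan is the minimization argument the author sketches. On the nonempty affine space $\widetilde\Theta$, minimize the convex piecewise--linear functional
\begin{equation*}
g(\theta)\;=\;\tfrac{1}{2}\sum_j\bigl(|\theta_j|-\theta_j\bigr),
\end{equation*}
and suppose a minimizer $\theta^*$ satisfies $g(\theta^*)>0$. Extremality forbids any tangent vector $v\in T\widetilde\Theta$ that is strictly positive on the nonpositive coordinate set $N:=\{j:\theta_j^*\le 0\}$ and nonnegative elsewhere. Using the explicit description of $T\widetilde\Theta$ from Section~\ref{linear} together with the discrete Stokes formula of Section~\ref{stokes}, I expect to show that $N$ must carry the structure of a train track $\tau$ on $\partial M$; Lemma~\ref{traintracks} will then normalize $\tau$ into an embedded transverse loop $\gamma\subset\partial M$ along which the $\alpha$-weight $W^\alpha(\gamma)$ can be compared with the value forced by the negativity of $\theta^*$ on $N$. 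According as $\gamma$ is trivial, the loop around a vertex, boundary-parallel in $M$, or a compression curve, one of (i)--(iv) will be violated, giving the required contradiction.

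The main obstacle will be this last chain of implications: controlling precisely which loops $\gamma$ can be produced from the negative set $N$, and matching each type of loop to the correct Rivin-like inequality, is where the topological subtleties of the compressible case (compression disks, the dichotomy between boundary-parallel and essential curves) force the use of the full machinery of Sections~\ref{stokes}--\ref{nonnegative}, and where the argument goes genuinely beyond the ball case treated in \cite{gueritaud-polyhedra}.
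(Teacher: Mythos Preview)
Your overall architecture matches the paper's: spun triangulation, the linear system $\Sigma$, volume maximization for existence and uniqueness once $\Theta\neq\emptyset$, and the minimization of the negative-part functional $g$ on $\widetilde\Theta$ to prove $\Theta\neq\emptyset$. That much is exactly right.

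Where your plan goes astray is the mechanism of the final contradiction. You write that Lemma~\ref{traintracks} ``normalize[s] $\tau$ into an embedded transverse loop $\gamma$'' whose $\alpha$-weight is then tested against (i)--(iv). That is not what the lemma does, and that is not how the contradiction arises. The Rivin-type inequalities are used \emph{earlier}, in Proposition~\ref{nodisks} (via the Stokes formula and a holonomy computation), to show that no curve \emph{carried} by the train track $\mathcal{T}$ can bound a disk in $M$; this is what then forces the complementary regions to be bigons or smooth annuli (Proposition~\ref{bigons}) and rules out Reeb components (Proposition~\ref{noreeb}). Lemma~\ref{traintracks} is a dichotomy: every simple closed curve is either carried by $\mathcal{T}$ or can be made everywhere transverse to it. Applied to the \emph{meridian} $\mu$, Proposition~\ref{nodisks} excludes the carried option, so $\mu$ is transverse.

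The point you are missing is why the meridian is the curve that matters. By Corollary~\ref{gentang}, the tangent space $T\widetilde\Theta$ is spanned by the $\Phi T_{\gamma_i}$ (vertex loops) together with $\Phi T_\gamma$ for $\gamma$ in the meridian class --- and \emph{only} the meridian class among essential slopes, because of constraint (\ref{sigma}--iv). A transverse meridian $\gamma$ crosses every branch of $\mathcal{T}$ with the same sign, so by Observation~\ref{recount} the deformation $\varepsilon\,\Phi T_\gamma$ strictly increases every nonpositive coordinate $\theta_j$. That contradicts minimality of $g(\theta^*)$ (or of $\nu$). So the final blow is not a violation of (i)--(iv) by some loop extracted from $\mathcal{T}$; it is the existence of a good tangent direction in $\widetilde\Theta$, and the whole train-track analysis is there precisely to guarantee that the meridian supplies one.
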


It is easy to see that the reciprocal map $M_\alpha \mapsto \alpha$ is well-defined and continuous (at any rate the proof of Fact \ref{completion} below will imply it). The chief aim of this paper is to build the direct map.

\section{The linear problem}\label{linear}
We now describe a natural dissection of the solid torus $M$ into tetrahedra, and a linear problem attached to this dissection. An ideal hyperbolic tetrahedron $\Delta$ is defined up to isometry by its triple of dihedral angles $x,y,z>0$ such that $x+y+z=\pi$ (any two opposite edges carry the same angle).

Consider the decomposition of the torus $\partial M$ into triangles $\tau_1, \dots, \tau_m$. \emph{Cone} this decomposition to a point $\infty$ (the cone on each triangle is a tetrahedron), then remove the $0$--skeleton of $\partial M$ as well as the point $\infty$: now each $\tau_i$ is the base of a (topological) ideal tetrahedron $\Delta_i$. 

Number the $3m$ \emph{corners} of the $m$ triangles $\tau_1, \dots, \tau_m$ with labels $1$ through $3m$, and write formal variables $\theta_1, \dots, \theta_{3m}$ in these corners (the order does not matter). If $v, v', v''$ are (the ideal vertices at) the three corners of $\tau_i$ bearing formal variables $\theta_j, \theta_{j'}, \theta_{j''}$ respectively, we see $\theta_j$ (resp. $\theta_{j'}$, resp. $\theta_{j''}$) as the interior dihedral angle of $\Delta_i$ at the edges $\infty v$ and $v'v''$ (resp. $\infty v'$ and $vv''$, resp. $\infty v''$ and $vv'$).

Let $\mu \in H_1(\partial M, \mathbb{Z})$ denote the class of the meridian of $M$, and consider a transverse (embedded) path $\gamma\subset\partial M$ along the slope $\mu$, crossing edges $e_1,\dots, e_k$ in that cyclic order, with possible repeats. Any two consecutive edges $e_s$ and $e_{s+1}$ bound a triangle $\tau_s$, and we say that $\gamma$ makes a Left (resp. Right) at $\tau_s$ if $e_s, e_{s+1}$ appear clockwise (resp. counterclockwise) consecutively in that order along the boundary of $\tau_s$. For each $1\leq s \leq k$, let $\varepsilon_s$ be equal to $1$ if $\gamma$ makes a left at $\tau_s$, and to $-1$ if $\gamma$ makes a right at $\tau_s$. Let moreover $X_s \in \{1,2,\dots,3m\}$ be the label at the corner of $\tau_s$ where the edges $e_s, e_{s+1}$ meet, and $\theta_{X_s}$ be the associated formal variable.

Denote by $\Sigma$ the following system in the real variables $\theta_1, \dots, \theta_{3m}$:

\begin{equation} \label{sigma}
\left \{ 
\begin{array}{lrcll}
\text{(i)}&
\theta_j+\theta_{j'}&=&\pi-\alpha(e) & 
\left \{ \begin{array}{l}
\text{if $j, j'$ are the labels} \\
\text{opposite an edge $e$ of $\partial M$;}
\end{array} \right . \\
&&&\\
\text{(ii)}&
\theta_j+\theta_{j'}+\theta_{j''} &=&\pi & 
\left \{ \begin{array}{l}
\text{if $j,j',j''$ are the labels at the} \\  
\text{corners of a triangle $\tau_i$ of $\partial M$;}
\end{array} \right . \\
&&&\\
\text{(iii)}&
\theta_{j_1}+\dots+\theta_{j_t}&=&2\pi & 
\left \{ \begin{array}{l}
\text{if $j_1, \dots , j_t$ are the} \\
\text{labels around a vertex;}
\end{array} \right . \\
&&&\\
\text{(iv)}&
\sum_{s=1}^k\varepsilon_s\theta_{X_s} &=&K & 
\left \{ \begin{array}{l}
\text{with $\varepsilon_s\in\{-1,1\}$ and} \\
\text{$X_s \in \{1,\dots,3m\}$ as above.}
\end{array} \right . 
\\
\end{array}
\right .
\end{equation}

Condition (\ref{sigma}--i) is the natural criterion to make the exterior dihedral angle of $M$, at the edge $e$, equal to $\alpha(e)$: indeed $\theta_j$ and $\theta_{j'}$ are the angles of the two adjacent tetrahedra at their common edge $e$. Conditions (\ref{sigma}--ii--iii) are clearly required for a hyperbolic metric on a union of ideal tetrahedra.

It is a simple exercise to check that (\ref{sigma}--iv) does not depend on the chosen transverse path $\gamma$, but only on its slope $\mu$, because (\ref{sigma}--ii--iii) allow one to drag the path $\gamma$ across a vertex of $\partial M$ without variation of the left member of (\ref{sigma}--iv). The left member of (\ref{sigma}--iv) is called the (angular) \emph{holonomy} of $\gamma$ and, by extension, of its homology class $\mu$. The class $-\mu$ has holonomy $-K$.

The choice of $K$, rather than $-K$, as the right member of (\ref{sigma}--iv), is arbitrary, and corresponds to the choice of letting the tips of the ideal tetrahedra spin one way or the other along the core of the solid torus $M$. Thus, an interesting consequence of Theorem \ref{main} is that (\ref{main}) admits positive solutions if and only if the modified system (with $K$ changed to $-K$ in line iv) does.

\subsection{The solution space of $\Sigma$}

We begin with a construction which, to each transverse path $\gamma$, associates a tangent vector to the solution space of $\Sigma$. Lemma \ref{NZ} will state that all solutions can be reached by following such vectors, and in Section \ref{maximization} we will choose a particular solution to realize the solid torus $M$ geometrically.

Let $\Phi:\mathbb{R}^{3m}\rightarrow \mathbb{R}^{3m}$ be the linear map which takes the $j$-th basis vector $e_j$ to $(e_{j''}-e_{j'})$, where the labels $j,j',j'' \in \{1,\dots,3m\}$ appear counterclockwise in that order at the corners of a triangle $\tau$ of the torus $\partial M$ (this property characterizes $j',j''$ if $j$ is given).

Let $\gamma$ be an (oriented) transverse path in $\partial M$. We define the vector $T_\gamma \in \mathbb{R}^{3m}$ as follows: whenever $j_1,j_2,j_3 \in \{1,\dots,3m\}$ are the labels at the three corners of a triangle $\tau\subset \partial M$ (in any order), denote by $N_{j_1j_2j_3}$ the number of times the oriented path $\gamma$ crosses the triangle $\tau$ with entry point on the edge $j_1j_2$ and exit point on the edge $j_2j_3$. Then, 
$$T_\gamma:=\sum_{j=1}^{3m}\left ( N_{j'jj''}-N_{j''jj'}\right ) e_j~.$$
Thus, the $j$-th coordinate of $T_\gamma$ is the number of times $\gamma$ traverses the angular sector of the corner $j$, counted algebraically.

As a consequence, we find
\begin{eqnarray*} \Phi (T_\gamma)
&=& \sum_{j=1}^{3m} \left ( N_{j'jj''}-N_{j''jj'}\right ) (e_{j''}-e_{j'}) \\
&=& \sum_{j=1}^{3m} \left ( N_{j''j'j}-N_{jj'j''}\right ) e_j
  - \sum_{j=1}^{3m} \left ( N_{jj''j'}-N_{j'j''j}\right ) e_j \\
&=& \sum_{j=1}^{3m} \left ( N_{j''j'j}+N_{j'j''j}-N_{jj''j'}-N_{jj''j'}\right )e_j~.
\end{eqnarray*}
\begin{observation}\label{recount}
Thus, the $j$-th coordinate of $\Phi(T_\gamma)$ is the number of times the path $\gamma$ crosses the edge $j'j''$, counted positively for crossings towards $j$ (or into the triangle $\tau$), negatively for crossings away from $j$ (out of $\tau$).
\end{observation}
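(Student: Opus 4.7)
The plan is that this observation is essentially a rereading of the displayed coefficient expansion of $\Phi(T_\gamma)$, so the work consists of unpacking the combinatorial meaning of each of the four $N$--terms that appear in the coefficient of $e_j$, and checking the sign convention. First I would fix the triangle $\tau$ carrying the corner labelled $j$ (so its three corners are $j, j', j''$), and note that its edge opposite $j$ is precisely $j'j''$. Any unsigned crossing of $\gamma$ through this edge is, relative to $\tau$, either an entry into $\tau$ (a crossing ``towards $j$'') or an exit out of $\tau$ (a crossing ``away from $j$''), and either way the short arc of $\gamma$ thus produced inside $\tau$ terminates on exactly one of the other two edges $jj'$ or $jj''$.

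Next I would assign each such crossing to one of the four $N$--values $N_{j''j'j}$, $N_{j'j''j}$, $N_{jj''j'}$, $N_{jj'j''}$. By the definition of $N_{j_1j_2j_3}$ (entry on $j_1j_2$, exit on $j_2j_3$): entries into $\tau$ through $j'j''$ contribute to $N_{j''j'j}$ (if they exit on $j'j$) or to $N_{j'j''j}$ (if they exit on $j''j$), while exits out of $\tau$ through $j'j''$ contribute to $N_{jj''j'}$ (if they entered on $jj''$) or to $N_{jj'j''}$ (if they entered on $jj'$). Thus
\[
N_{j''j'j}+N_{j'j''j}-N_{jj''j'}-N_{jj'j''}
\]
equals (entries into $\tau$ through $j'j''$) minus (exits from $\tau$ through $j'j''$), which is exactly the algebraic count of crossings of $j'j''$ with the sign convention claimed. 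Comparing with the coefficient of $e_j$ in the preceding display gives the observation.

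The only subtle point I would double--check is that the sign convention baked into $\Phi$ (namely $\Phi(e_j) = e_{j''}-e_{j'}$ with $j,j',j''$ counterclockwise at $\tau$) is compatible with declaring ``towards $j$'' to be positive. This reduces to verifying that when $\gamma$ passes from edge $j'j''$ to edge $jj''$ (say), the two terms produced by $\Phi$ acting on the $e_{j'}$ and $e_{j''}$ coordinates of $T_\gamma$ combine with the correct signs, which is a routine check on a single triangle. I do not anticipate a real obstacle here: the statement is a pure bookkeeping consequence of the formula just derived, and its only role is to provide a geometric interpretation of $\Phi(T_\gamma)$ that will be used as the ``discrete Stokes'' input in later sections.
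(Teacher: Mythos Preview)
Your proposal is correct and is precisely the unpacking the paper leaves to the reader: the Observation in the paper is stated without proof, immediately after the displayed expansion of $\Phi(T_\gamma)$, and your four-term case analysis of the $N$-values is exactly the bookkeeping that justifies it. Incidentally, you have silently corrected a typo in the paper's display (where $N_{jj''j'}$ is repeated in place of $N_{jj'j''}$); your version of the coefficient is the right one.
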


\begin{proposition} \label{deform}
For any transverse path $\gamma$ in the torus $\partial M$, if $\theta= (\theta_1,\dots,\theta_{3m})$ is a solution of \emph{(\ref{sigma}--i--ii--iii)}, then so is $\theta+\Phi(T_\gamma)$.
\end{proposition}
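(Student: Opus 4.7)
The plan is to verify directly that $\Phi(T_\gamma)$ lies in the kernel of the linear map defined by the left-hand sides of (\ref{sigma}--i), (\ref{sigma}--ii), (\ref{sigma}--iii); since $\theta$ solves this system, adding $\Phi(T_\gamma)$ preserves all three families of equations. The whole argument is driven by Observation \ref{recount}: the $j$-th coordinate of $\Phi(T_\gamma)$ is the algebraic intersection number of $\gamma$ with the edge $j'j''$ opposite corner $j$, counted $+1$ for crossings into the triangle $\tau$ that contains corner $j$.

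For (\ref{sigma}--i), I would fix an edge $e$ of $\partial M$ shared by two triangles $\tau,\tau'$, with $j,j'$ the opposite corners to $e$ in $\tau,\tau'$ respectively. Observation \ref{recount} identifies the $j$-th and $j'$-th coordinates of $\Phi(T_\gamma)$ as two counts of crossings of the same edge $e$ with opposite sign conventions (a crossing into $\tau$ is a crossing out of $\tau'$), and so they cancel. For (\ref{sigma}--ii), I would sum the three coordinates attached to a single triangle $\tau$: again by Observation \ref{recount} this sum is the signed count of crossings of the three edges of $\tau$, $+1$ for entry and $-1$ for exit, and each subarc of $\gamma$ in $\tau$ contributes $+1-1=0$.

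For (\ref{sigma}--iii), I would note that the edges opposite to $v$ in the triangles meeting $v$ are exactly the edges of the link $\partial\mathrm{St}(v)$ of $v$ (using that $\mathcal{O}$ is simplicial, so $\mathrm{St}(v)$ is a topological disk whose boundary is a simple closed polygonal curve of edges of $\partial M$). By Observation \ref{recount}, the sum of coordinates around $v$ then reads as the algebraic intersection number of $\gamma$ with the closed curve $\partial\mathrm{St}(v)$, which vanishes since $\gamma$ is itself a closed curve and $\partial\mathrm{St}(v)$ bounds the disk $\mathrm{St}(v)$ in $\partial M$. No step is genuinely hard; the entire proof amounts to careful sign-bookkeeping from Observation \ref{recount}, with the only real subtlety being that the closedness of $\gamma$ is precisely what delivers the vertex identity (\ref{sigma}--iii).
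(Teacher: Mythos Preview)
Your proof is correct and, for (\ref{sigma}--i), essentially coincides with the paper's argument via Observation \ref{recount}. For (\ref{sigma}--ii) the paper argues even more simply that \emph{any} vector in the image of $\Phi$ works, since $\Phi(e_s)=e_{s''}-e_{s'}$ has corner-sum $0+1+(-1)=0$ in each triangle; your entry/exit count is an equally valid variant.

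The genuine difference is at (\ref{sigma}--iii). You give a direct geometric argument: the sum of coordinates around $v$ is the algebraic intersection of $\gamma$ with the link $\partial\mathrm{St}(v)$, which vanishes because the link bounds a disk. The paper instead shows that (\ref{sigma}--iii) is \emph{redundant}: summing the face relations (\ref{sigma}--ii) over the triangles adjacent to $v$ and subtracting the edge relations (\ref{sigma}--i) over the edges adjacent to $v$ yields exactly (\ref{sigma}--iii), with right-hand side $2\pi$ by (\ref{alpha}--ii). This buys the paper something extra --- the redundancy is reused in the dimension count of Corollary \ref{gentang} --- whereas your argument is self-contained but single-purpose. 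One small caveat on your version: despite the terminology, the paper's ``simplicial decomposition'' is really a $\Delta$-complex (it explicitly allows $m=2$, i.e.\ one vertex and loop edges), so $\mathrm{St}(v)$ need not be an embedded disk in $\partial M$. Your argument survives once rephrased: the abstract star is a disk mapping to $\partial M$, so the link cycle is null-homologous and the intersection pairing with $\gamma$ still vanishes.
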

\begin{proof}
(Condition (\ref{sigma}--iv) is intentionally absent from this statement).
If $j,j',e$ are as in (\ref{sigma}--i), then the $j$-th and $j'$-th coordinate of $\Phi(T_\gamma)$ are each the algebraic number of times the path $\gamma$ crosses the edge $e$, counted with opposite signs: so they sum to $0$, and (\ref{sigma}--i) continues to hold at $\theta+\Phi(T_\gamma)$. The identity (\ref{sigma}--ii) clearly continues to hold even if $\theta$ is perturbed by a vector of the form $\Phi(e_s)$ (with $e_s$ any basis vector of $\mathbb{R}^{3m}$), because $0+1+(-1)=0$. Finally, (\ref{sigma}--iii) is redundant with (\ref{sigma}--i--ii): for a vertex $v$ of degree $k$, add up the $k$ equations of type (\ref{sigma}--ii) corresponding to the $k$ faces adjacent to $v$, and subtract the $k$ equations of type (\ref{sigma}--i) corresponding to the $k$ edges adjacent to $v$: the result is the equation of type (\ref{sigma}--iii) corresponding to $v$, and its right member is $2\pi$ by (\ref{alpha}--ii).
\end{proof}

It remains to find out which deformations additionally preserve (\ref{sigma}--iv). The following is a paraphrase of Theorem 2.2, Proposition 2.3 and Proposition 2.5 of \cite{neumann-zagier}.
\begin{lemma} \label{NZ}
Let $\gamma_1,\dots,\gamma_{\frac{m}{2}}$ be the loops around the vertices of the torus $\partial M$, and let $\gamma,\gamma'$ be transverse paths whose classes form a basis of $H_1(\partial M,\mathbb{Z})$, with $[\gamma]=\mu$, the meridian. Let $T$ be the $3m$-by-$(\frac{m}{2}+2)$ matrix with columns $T_{\gamma_1},\dots,T_{\gamma_{\frac{m}{2}}},T_{\gamma}, T_{\gamma'}$. Then the only linear relationship between the columns of $\Phi T$ is $$\sum_{i=1}^{m/2}\Phi(T_{\gamma_i})=0~,$$ and one has 
\begin{equation}
\label{symplec} 
{}^tT\,\Phi\, T = \left ( \begin{array}{cccc} 
0 & \cdots  & 0 & 0 \\
\vdots & \ddots  & \vdots & \vdots \\
0 & \cdots & 0 & \pm 2 \\
0 & \cdots & \mp 2 & 0 \end{array}\right )~.
\end{equation}
\end{lemma}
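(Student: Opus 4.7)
The proof is a direct translation of the Neumann--Zagier symplectic theory, in four short steps.

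First, I would show $\Phi^T=-\Phi$ by inspecting the entries $\Phi_{ij}=\delta_{i,j''}-\delta_{i,j'}$: they vanish unless $i,j$ share a triangle, and in that triangle the CCW-next/CCW-previous relation between $i$ and $j$ is symmetric, giving $\Phi_{ji}=-\Phi_{ij}$. As a consequence, ${}^tT\,\Phi\,T$ is antisymmetric, so its diagonal entries automatically vanish.

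Second, the identity $\sum_i\Phi T_{\gamma_i}=0$ is immediate: every corner lies at a unique vertex, so $\sum_i T_{\gamma_i}$ equals the all-ones vector $\sum_j e_j$; grouping the resulting $3m$ summands triangle by triangle, each triangle (with CCW corners $a,b,c$) contributes $(e_c-e_b)+(e_a-e_c)+(e_b-e_a)=0$. A similar boundary-of-a-disk argument, combined with Observation~\ref{recount}, gives the vanishing of the $\gamma_i$-rows and columns of ${}^tT\,\Phi\,T$: since $(T_{\gamma_i})_k$ is the indicator of corners at vertex $v_i$, the entry ${}^tT_{\gamma_i}\,\Phi\, T_{c'}$ sums the signed crossings of $c'$ with the edges of $\partial\,\mathrm{Star}(v_i)$, which bounds a disk and thus contributes $0$.

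Third, for the only remaining entry ${}^tT_\gamma\,\Phi\, T_{\gamma'}$, I would first note homotopy invariance: sliding a representative of $[\gamma]$ across a vertex $v_i$ alters $T_\gamma$ by $\pm T_{\gamma_i}$, whose $\Phi$-pairing with $T_{\gamma'}$ is zero by the previous step. Therefore ${}^tT_\gamma\,\Phi\, T_{\gamma'}$ descends to $H_1(\partial M;\mathbb{Z})$, and by bilinearity and the antisymmetry above it equals $\lambda\cdot i([\gamma],[\gamma'])$ for a universal constant $\lambda$. Computing this on a minimal example (say the two-triangle torus with a horizontal/vertical basis) yields $\lambda=\pm 2$, completing the determination of~(\ref{symplec}).

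Finally, for the uniqueness of the linear relation: given $\sum a_i\Phi T_{\gamma_i}+b\,\Phi T_\gamma+b'\,\Phi T_{\gamma'}=0$, pairing on the left with ${}^tT_\gamma$ (resp.\ ${}^tT_{\gamma'}$) and using (\ref{symplec}) gives $b'=0$ (resp.\ $b=0$). Writing $V(j)$ for the index of the vertex at corner $j$, the vector $w:=\sum_i a_iT_{\gamma_i}$ has $j$-th coordinate $a_{V(j)}$, so $(\Phi w)_k=a_{V(k')}-a_{V(k'')}$; vanishing forces $a$ to take the same value on any two vertices sharing a triangle, and connectedness of the $1$-skeleton of $\partial M$ then makes $a$ constant. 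The only delicate point in the whole argument is the factor of $2$ in Step 3: in the triangle-by-triangle local count each honest geometric crossing contributes $\pm 1$, but ``phantom'' pairs of arcs cutting off distinct corners yet failing to intersect contribute another copy, exactly doubling the total.
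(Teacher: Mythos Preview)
The paper does not actually prove this lemma: it simply asserts that ``the proof is the same as in \cite{neumann-zagier}'' and that the unpaired faces change nothing essential. Your write-up is therefore considerably more detailed than what the paper offers, and Steps 1, 2 and 4 are correct and cleanly argued (the antisymmetry of $\Phi$, the vanishing of the vertex-loop rows via Observation~\ref{recount}, and the deduction of the unique relation from the matrix identity).

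The genuine weak point is Step~3. Once you have shown that ${}^tT_\gamma\,\Phi\,T_{\gamma'}$ depends only on the homology classes, you correctly conclude that it equals $\lambda\cdot i([\gamma],[\gamma'])$ for some constant $\lambda$ --- but this $\lambda$ is, a priori, a constant \emph{for the given triangulation of $\partial M$}, not a universal one. Computing it on the two-triangle torus therefore says nothing about the $m$-triangle triangulation in the lemma. To close this gap you would need either (a) to show $\lambda$ is invariant under Pachner moves, or (b) to give a direct count valid for an arbitrary triangulation. Your final ``phantom pairs'' remark gestures at (b), but the claim that non-crossing pairs of arcs at distinct corners always contribute ``another copy, exactly doubling the total'' is not a proof: in a general triangulation there can be many such pairs and it is not obvious (and not shown) that their signed count always matches the honest geometric crossings. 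The actual Neumann--Zagier argument handles this via their chain complex rather than by a local crossing count, which is presumably why the paper simply cites them.
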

The proof is the same as in \cite{neumann-zagier} (the fact that some faces of our ideal tetrahedra are not paired up does not change anything essential).

\begin{corollary} \label{gentang}
The system (\ref{sigma}) has real solutions, and the tangent space to the solution space $\widetilde{\Theta}$ is $\frac{m}{2}$-dimensional, generated by $\Phi T_{\gamma_1},\dots,\Phi T_{\gamma_{\frac{m}{2}}}, \Phi T_{\gamma}$.
\end{corollary}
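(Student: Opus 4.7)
The plan is to combine Proposition \ref{deform} with Lemma \ref{NZ}. First I check that the $m/2+1$ vectors $\Phi T_{\gamma_1},\dots,\Phi T_{\gamma_{m/2}},\Phi T_\gamma$ all lie in $T\widetilde{\Theta}$. Proposition \ref{deform} handles the homogeneous versions of (\ref{sigma}--i--ii--iii). For (\ref{sigma}--iv), the left-hand side $\sum_s\varepsilon_s\theta_{X_s}$ equals ${}^tT_\gamma\cdot\theta$ by the very definition of $T_\gamma$, so the homogeneous condition reads ${}^tT_\gamma v=0$. This holds for $v=\Phi T_{\gamma_i}$ and for $v=\Phi T_\gamma$ because the corresponding entries of ${}^tT\,\Phi\,T$ vanish in the matrix of Lemma \ref{NZ}: the block $(\gamma,\gamma_i)$ is zero, and the diagonal entry $(\gamma,\gamma)$ of the antisymmetric tail is also zero.

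Next, by Lemma \ref{NZ} the only relation among the $m/2+2$ columns of $\Phi T$ is $\sum_i\Phi T_{\gamma_i}=0$, so the sublist $\Phi T_{\gamma_1},\dots,\Phi T_{\gamma_{m/2}},\Phi T_\gamma$ satisfies just that one relation and spans a subspace of $T\widetilde{\Theta}$ of dimension exactly $m/2$. To see this is all of $T\widetilde{\Theta}$, I identify the tangent space $\ker A$ of the $(i,ii,iii)$-subsystem with $\mathrm{Image}(\Phi T)$: Proposition \ref{deform} gives the inclusion, and equality follows from the combinatorial rank count of \cite{neumann-zagier}, which pins both dimensions at $m/2+1$ by showing that the only linear dependencies among the $3m$ equations of types (i--ii--iii) are the obvious global one (sum of all type-(i) equations equals sum of all type-(ii) equations, both equaling $\sum_j\theta_j$) together with the $m/2$ per-vertex ones expressing each (iii) as $\sum_{\tau\ni v}(\mathrm{ii})-\sum_{e\ni v}(\mathrm{i})$. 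Adding (\ref{sigma}--iv) then cuts the dimension by one more, since the linear form ${}^tT_\gamma$ takes the nonzero value $\pm 2$ on $\Phi T_{\gamma'}\in\mathrm{Image}(\Phi T)$. Hence $\dim T\widetilde{\Theta}=m/2$.

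For non-emptiness of $\widetilde{\Theta}$, I check that the identified row dependencies are matched on the right-hand sides. Condition (\ref{alpha}--ii), namely $\sum_{e\ni v}\alpha(e)=2\pi$ at each vertex, is precisely what makes the per-vertex dependency produce $2\pi$ on both sides; summing over vertices yields $\sum_e\alpha(e)=\tfrac{m}{2}\pi$, which makes the global dependency consistent. Hence (\ref{sigma}--i--ii--iii) admits some solution $\theta^\circ$. Setting $\theta=\theta^\circ+c\,\Phi T_{\gamma'}$ with $c=(K-{}^tT_\gamma\theta^\circ)/(\pm 2)$ preserves (i--ii--iii) by Proposition \ref{deform} and tunes the meridian holonomy to exactly $K$, so $\theta\in\widetilde{\Theta}$. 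The main delicate step is the combinatorial rank count identifying $\ker A$ with $\mathrm{Image}(\Phi T)$; as the paper notes just after Lemma \ref{NZ}, it is unchanged from \cite{neumann-zagier}.
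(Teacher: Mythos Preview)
Your argument is correct and follows the same route as the paper: use Proposition~\ref{deform} and the matrix of Lemma~\ref{NZ} to place $\Phi T_{\gamma_1},\dots,\Phi T_{\gamma_{m/2}},\Phi T_\gamma$ inside $T\widetilde{\Theta}$, count ranks to pin $\dim\widetilde{\Theta}=m/2$, and verify that the row dependencies are matched on the right-hand sides (via (\ref{alpha}--ii)) so that $\widetilde{\Theta}\neq\emptyset$. The one substantive difference is the step you flag as delicate and outsource to \cite{neumann-zagier}: the paper instead gives a direct two-line argument that the \emph{only} dependency among the equations (\ref{sigma}--i--ii) is the global sum, by noting that in any such dependency the coefficient of $\theta_j$ is simultaneously a function of the opposite edge (from the (i)-side) and of the ambient triangle (from the (ii)-side), which on the connected torus forces all coefficients equal; this is worth including rather than citing, since the Neumann--Zagier setting has all tetrahedron faces paired whereas here they are not.
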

\begin{proof}
First, (\ref{sigma}) involves $3m$ unknowns, and $(\frac{5m}{2}+1)$ relations (one for each of the $\frac{3m}{2}$ edges (\ref{sigma}--i), one for each of the $m$ triangles (\ref{sigma}--ii), plus (\ref{sigma}--iv); we can omit (\ref{sigma}--iii) because they are redundant with the rest).

Let us now check that there is only one relationship (up to scalar multiplication) between the conditions (\ref{sigma}--i) and (\ref{sigma}--ii). In a linear combination of equations (\ref{sigma}--i), the coefficient of the variable $\theta_j$ depends only on the edge opposite the corner labelled $j$. In a linear combination of equations (\ref{sigma}--ii), the coefficient of $\theta_j$ depends only on the triangle to which the corner $j$ belongs. If these two linear combinations are equal, it is thus easy to see that all $\theta_j$ have the same coefficient (because $\partial M$ is connected). Therefore, the only relationship is that the sum of \emph{all} equations (\ref{sigma}--i) is equal to the sum of \emph{all} equations (\ref{sigma}--ii). We only need to check that the right members agree. If $e \succ v$ denotes the property for an edge $e$ to be incident to a vertex $v$, then the right members of (\ref{sigma}--i) add up to $$\frac{3m}{2}\pi-\sum_{e\in E}\alpha(e)=\frac{3m}{2}\pi-\frac{1}{2}\sum_{v\in V}\sum_{e \succ v} \alpha(e)=\frac{3m}{2}\pi-\frac{1}{2}\left (\frac{m}{2}\cdot 2\pi\right )=m\pi$$
by (\ref{alpha}--ii) (in the double sum, count $\alpha(e)$ twice if $e$ is incident to $v$ at both ends), while the right members of (\ref{sigma}--ii) clearly also add up to $m\pi$.

Therefore the solution space $\Theta'$ of (\ref{sigma}--i--ii)=(\ref{sigma}--i--ii--iii) has dimension exactly $3m-\frac{5m}{2}+1=\frac{m}{2}+1$. By Lemma \ref{NZ} and Proposition \ref{deform}, the tangent space of $\Theta'$ is generated by $\Phi T_{\gamma_1},\dots,\Phi T_{\gamma_{\frac{m}{2}}},\Phi T_{\gamma},\Phi T_{\gamma'}$, the column vectors of $\Phi T$ (the first $\frac{m}{2}$ of which sum to $0$).

Finally, note that (\ref{sigma}--iv) can be written: ${}^tT_{\gamma}\theta=K$. This is a codimension--1 condition in $\Theta'$, defining $\widetilde{\Theta}$: more precisely, by looking at the next-to-last line of (\ref{symplec}), all column vectors of $\Phi T$ belong to $\ker {}^tT_{\gamma}$, except $\Phi T_{\gamma'}$. Therefore ${}^tT_{\gamma}\theta$ can be set to any value by just perturbing $\theta$ by a multiple of $\Phi T_{\gamma'}$. In conclusion, the tangent space to the solution space $\widetilde{\Theta}$ of (\ref{sigma}) is generated by $\Phi T_{\gamma_1},\dots,\Phi T_{\gamma_{\frac{m}{2}}},\Phi T_{\gamma}$, where $[\gamma]$ is the meridian of the solid torus $M$.
\end{proof}

\subsection{Volume maximization} \label{maximization}

Let us now assume that the positive solution space $\Theta := \widetilde{\Theta}\cap\mathbb{R}_{>0}^{3m}$ is non--empty. Then, Theorem \ref{main} holds. The argument is as follows (we essentially follow Rivin \cite{rivin-1}: the context is only slightly different here because of the presence of (\ref{sigma}--iv)).

We call a point of $\Theta$ an \emph{angle structure}. Define the volume functional 
\begin{equation} \label{volfunc}
\begin{array}{rccl} &\Theta & \longrightarrow & \mathbb{R}_+^* \\ \mathcal{V}~: & (\theta_1,\dots,\theta_{3m})& \longmapsto & \displaystyle{-\sum_{s=1}^{3m} \int_{0}^{\theta_s} \log |2\sin t| \, dt~.} \end{array}\end{equation} 
(This functional associates to an angle structure the sum of the volumes of the corresponding ideal tetrahedra, by the famous Lobatchevski formula \cite{lobatchevski}.) 

\begin{fact} If $\Theta\neq\emptyset$, then $\Theta$ contains a unique critical point of $\mathcal{V}$. \end{fact}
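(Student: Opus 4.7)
The plan is the classical concave-maximization argument of Rivin and Casson--Colin: show that $\mathcal{V}$ extends continuously to the compact closure $\overline{\Theta}$ in the affine slice $\widetilde{\Theta}$, that it is strictly concave on $\Theta$, and that its maximum is forced to lie in the interior $\Theta$. Uniqueness then comes for free from strict concavity, and any interior maximum is automatically a critical point.

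First I would observe that, since $\log|2\sin t|$ is integrable on $[0,\pi]$, the formula (\ref{volfunc}) in fact defines a continuous function $\mathcal{V}:\overline{\Theta}\to \mathbb{R}$; and $\overline{\Theta}\subset \widetilde{\Theta}\cap [0,\pi]^{3m}$ is compact. Hence $\mathcal{V}$ attains its maximum somewhere on $\overline{\Theta}$. The partial derivative $\partial \mathcal{V}/\partial\theta_s = -\log|2\sin\theta_s|$ extends smoothly to the open face $\{\theta_s\in(0,\pi)\}$ but diverges to $+\infty$ as $\theta_s\to 0^+$ or $\theta_s\to \pi^-$.

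The main step, and the only delicate one, is to show that the maximum lies in the open polytope $\Theta$. Pick an interior point $\theta^\circ\in\Theta$ (which exists by hypothesis) and any boundary point $\theta^*\in \partial\overline{\Theta}$; some coordinate $\theta^*_s$ equals $0$ or $\pi$. Along the segment $\theta(t)=(1-t)\theta^*+t\theta^\circ$, which stays in $\overline{\Theta}$ for $t\in[0,1]$ and enters $\Theta$ for $t>0$ (since $\theta^\circ$ is interior and $\widetilde{\Theta}$ is an affine subspace), I would compute the one-sided derivative
\[
\frac{d}{dt}\mathcal{V}(\theta(t))\Big|_{t=0^+} = \sum_{s=1}^{3m} -\log|2\sin\theta^*_s|\,(\theta^\circ_s-\theta^*_s).
\]
Terms with $\theta^*_s\in(0,\pi)$ are finite, while any term with $\theta^*_s\in\{0,\pi\}$ has a coefficient that diverges to $+\infty$ against a strictly positive (resp.\ strictly negative) factor $\theta^\circ_s-\theta^*_s$; the sum is therefore $+\infty$. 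So $\mathcal{V}(\theta(t))>\mathcal{V}(\theta^*)$ for small $t>0$, and $\theta^*$ is not a maximum. Hence the maximum is interior, hence a critical point of $\mathcal{V}|_{\widetilde{\Theta}}$.

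Finally I would verify strict concavity of $\mathcal{V}$ on $\Theta$ to get uniqueness. The Hessian of $\mathcal{V}$ is the diagonal form $-\sum_s \cot(\theta_s)\,d\theta_s^2$, so its restriction to a tangent vector $v\in T\widetilde{\Theta}$ decomposes as a sum over triangles $\tau_i$ of $\partial M$:
\[
D^2\mathcal{V}(\theta)(v,v) = \sum_{\tau_i} \bigl(-\cot\theta_j\, v_j^2 - \cot\theta_{j'}\,v_{j'}^2 - \cot\theta_{j''}\,v_{j''}^2\bigr),
\]
where the tangent condition (\ref{sigma}--ii) forces $v_j+v_{j'}+v_{j''}=0$ on each triangle. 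The standard fact that the Lobachevsky volume of an ideal tetrahedron is strictly concave in its three angles (viewed on the simplex $\theta_j+\theta_{j'}+\theta_{j''}=\pi$) makes each summand nonpositive and strictly negative unless $v_j=v_{j'}=v_{j''}=0$. Since this holds for every triangle, $D^2\mathcal{V}(\theta)(v,v)<0$ for every nonzero $v\in T\widetilde{\Theta}$, so $\mathcal{V}$ is strictly concave on $\Theta$ and its critical point is unique. The main obstacle is really the boundary-blow-up step: once that is in place, strict concavity and compactness do the rest.
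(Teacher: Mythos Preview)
Your outline matches the paper's, but the boundary step has a real gap. When $\theta^*_s=\pi$, the coefficient $-\log|2\sin\theta^*_s|$ is indeed $+\infty$, but the factor $\theta^\circ_s-\theta^*_s$ is \emph{negative}, so that term contributes $-\infty$, not $+\infty$. You acknowledge the sign of the factor and then still conclude ``the sum is therefore $+\infty$'', which does not follow: you have an indeterminate $+\infty-\infty$. And this situation is unavoidable: by (\ref{sigma}--ii), any boundary point with $\theta^*_s=\pi$ forces the other two angles of that triangle to vanish, so zeros and $\pi$'s always occur together in such ``flat'' triangles. A short computation shows that for a flat triangle the three divergent terms along the segment to $\theta^\circ$ cancel exactly (the $\log t$ parts sum to zero), leaving a \emph{finite} directional derivative. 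Hence if every degenerate triangle at $\theta^*$ happens to be flat, your argument gives no information.

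The paper closes this gap as follows. Your blow-up argument does prove the weaker statement that at a maximum $\theta^{\max}\in\partial\overline\Theta$, every triangle with a vanishing angle must be flat (otherwise one angle is $0$ while the other two lie in $(0,\pi)$, and the derivative toward the interior really is $+\infty$). Given a flat triangle, the edge $e$ opposite its $\pi$-angle satisfies $\pi+\theta_{j'''}=\pi-\alpha(e)\leq\pi$ by (\ref{sigma}--i), so $\alpha(e)=0$ and the angle $\theta_{j'''}$ across $e$ vanishes too; hence the neighboring triangle is also flat. Iterating produces a closed chain of flat triangles separated by edges of $\alpha$-weight $0$, which violates Rivin's condition (\ref{alpha}--i). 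This combinatorial propagation is the missing ingredient; once you insert it, the rest of your argument (continuous extension, compactness, strict concavity) is correct and essentially what the paper does.
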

 \begin{proof} First, $\mathcal{V}$ is strictly concave on $\Theta$ and can be continuously extended to the (compact) closure $\overline{\Theta}$ of $\Theta$ in $\mathbb{R}^{3m}_{\geq 0}$ (see for example \cite{gueritaud-futer}, Prop. 6.6). Therefore, we only need to check that any maximum $\theta^{\text{max}}$ of $\mathcal{V}$ in $\overline{\Theta}$ belongs to $\Theta$. A useful fact is that at $\theta^{\text{max}}$, any triangle $\tau$ of $\partial M$ that has an angle $\theta^{\text{max}}_i=0$ has angles $(\pi,0,0)$ up to order (otherwise $\mathcal{V}$ has derivative $+\infty$ in some direction at $\theta^{\text{max}}$, see for example \cite{gueritaud-futer}, Prop. 6.7). Such triangles are called \emph{flat}. By (\ref{sigma}--i), since $\alpha\geq 0$, the edge $e$ of $\tau$ opposite the angle $\pi$ must then satisfy $\alpha(e)=0$, and the angle opposite $e$ in the neighbor $\tau'$ of $\tau$ is also $0$. But then, $\tau'$ is also flat and we can repeat the argument, eventually finding a cyclic sequence $\tau,\tau',\tau'',\dots,\tau^{(s)}=\tau$ of flat triangles, such that in particular $\alpha(e)=0$ for any edge $e$ separating two consecutive triangles. This contradicts (\ref{alpha}--i). It follows that no angle at $\theta^{\text{max}}$ can vanish, so $\theta^{\text{max}}\in \Theta$. \end{proof}

\begin{fact} If $K>0$, then the angle structure $\theta=\theta^{\text{max}}$ determines an incomplete hyperbolic metric on the union of tetrahedra $\Delta_1\cup \dots \cup \Delta_m$. The completion is a complete but singular hyperbolic metric on the solid torus $M$ (minus the $\frac{m}{2}$ vertices of $\partial M$). The incompleteness locus becomes the singular locus $L$ after completion, and $L$ is a closed geodesic, isotopic to the core of $M$, surrounded by a cone angle of measure $K$. If $K=0$, then $\theta=\theta^{\text{max}}$ determines a complete hyperbolic metric on $\Delta_1\cup \dots \cup \Delta_m$, homeomorphic to the solid torus $M$ minus its core curve (in particular the core curve is replaced by a rank-2 cusp).
\label{completion} \end{fact}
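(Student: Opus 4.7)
The plan is to promote the critical angle data $\theta=\theta^{\max}$ to a full set of complex gluing equations and then apply standard developing-map and completion arguments. Each abstract tetrahedron $\Delta_i$, having positive angles summing to $\pi$ by (\ref{sigma}--ii), is realized as a genuine ideal hyperbolic tetrahedron, with complex shape parameter $z_j=\frac{\sin\theta_{j''}}{\sin\theta_{j'}}e^{i\theta_j}$ at each tip-to-base edge. Condition (\ref{sigma}--i) makes the dihedral angle along every boundary edge $e\in E$ equal to $\alpha(e)$ once the two adjacent tetrahedra are glued across their common interior face $vv'\infty$, so what remains is to verify consistency around each interior edge $v\infty$ and to analyze the meridian holonomy.

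Around each vertex loop $\gamma_i$ in $\partial M$, the classical log-gluing equation $\sum_k\log z_{j_k}=2\pi i$ around $v_i\infty$ has its imaginary part built into (\ref{sigma}--iii); its real part $\sum_k\bigl(\log\sin\theta''_{j_k}-\log\sin\theta'_{j_k}\bigr)=0$ can be rewritten, using $\partial_j\mathcal{V}=-\log|2\sin\theta_j|$ and Observation \ref{recount}, as the vanishing of the directional derivative $\partial_{\Phi T_{\gamma_i}}\mathcal{V}$. Since $\theta^{\max}$ is critical on $\widetilde{\Theta}$, and the tangent space to $\widetilde{\Theta}$ is spanned by $\Phi T_{\gamma_1},\dots,\Phi T_{\gamma_{m/2}},\Phi T_\gamma$ by Corollary \ref{gentang}, this derivative vanishes for each $i$; consequently the full complex gluing equation holds around every edge $v_i\infty$, and the tetrahedra glue coherently to a hyperbolic metric on $\Delta_1\cup\cdots\cup\Delta_m$, which is topologically $M\smallsetminus(V\cup L)$ with $L$ the core.

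To analyze the completion, apply the same Lagrange reasoning along the meridian tangent vector $\Phi T_\gamma$: the log-holonomy of $\mu=[\gamma]$ has imaginary part equal to $K$ by (\ref{sigma}--iv) and real part equal to $\partial_{\Phi T_\gamma}\mathcal{V}=0$ by criticality, hence is a pure rotation by $K$ about some geodesic axis $L_0\subset\mathbb{H}^3$. If $K>0$, the standard Thurston completion attaches the quotient of a cylindrical neighborhood of $L_0$ by this rotation, producing a closed geodesic $L$ isotopic to the core of $M$ with surrounding cone angle $K$, which together with the tetrahedral piece gives $M\smallsetminus V$ with singular locus $L$, as claimed. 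If $K=0$ both parts of the meridian holonomy vanish, so the meridian acts parabolically, $L_0$ corresponds to a rank-$2$ cusp, and the existing metric is already complete on $\Delta_1\cup\cdots\cup\Delta_m\simeq M\smallsetminus L$. The most delicate point is the translation via Observation \ref{recount} between the Lagrange orthogonality condition and the real-part log-gluing equations; once this is in hand, the developing-map construction and the completion argument run exactly as in Rivin \cite{rivin-1}.
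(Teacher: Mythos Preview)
Your proposal is correct and follows essentially the same approach as the paper: both arguments exploit criticality of $\mathcal{V}$ along the tangent directions $\Phi T_{\gamma_i}$ and $\Phi T_\gamma$ (furnished by Corollary \ref{gentang}) to obtain the real parts of the edge-gluing and meridian-holonomy equations, while (\ref{sigma}--iii) and (\ref{sigma}--iv) supply the imaginary parts; the completion analysis then proceeds via the standard Thurston picture. The paper phrases the holonomy step slightly differently---working with the developing map into $\mathbb{C}$ and the dichotomy ``translations versus dilations'' coming from commutativity of $\pi_1(\partial M)$---and it also observes that the argument reverses (any geometric realization forces the same criticality conditions), thereby obtaining uniqueness and continuous dependence, which you do not address but which lie outside the statement of the Fact itself.
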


\begin{proof}
Suppose first that $K>0$.
Recall that around each vertex $v$ of the triangulated torus $\partial M$ runs a transverse path $\gamma_i$. By Corollary \ref{gentang}, $\Phi T_{\gamma_i}$ belongs to the tangent space of $\Theta$, so $\mathcal{V}$ must be critical in the direction $\Phi T_{\gamma_i}$. Plugging this into (\ref{volfunc}), if $v$ has degree $k$, we get
\begin{equation} \label{sineratio}
{\frac{d}{d\varepsilon}}_{|\varepsilon=0}\mathcal{V}(\theta+\varepsilon\cdot\Phi(T_{\gamma_i}))=\sum_{s=1}^k \log \sin \theta_{j_{2s}}-\log\sin\theta_{j_{2s-1}}=0
\end{equation}
where for convenience we assumed that the labels at the corners of the $k$ triangles adjacent to $v$ are numbered $j_1$ through $j_{2k}$, counterclockwise (disregarding the $k$ labels at the vertex $v$ itself). By the sine formula of Euclidean trigonometry, $\prod_{j=1}^k\frac{\sin \theta_{j_{2s}}}{\sin \theta_{j_{2s-1}}}=1$ means that the $k$ ideal tetrahedra obtained by coning these $k$ triangles to $\infty$ glue properly, yielding a complete hyperbolic metric near $v\infty$ (see e.g. \cite{chan-hodgson} for a more detailed argument).

We therefore get a \emph{developing map} $f$ from the universal cover $X$ of the torus $\partial M$, to $\mathbb{R}^2\simeq \mathbb{C}$. This map induces a representation $\rho$ from $\pi_1(\partial M)\simeq \mathbb{Z}^2$ to the group $\mathbb{C}^*\ltimes \mathbb{C}$ of similarities of $\mathbb{C}$. Since $\pi_1(M)$ is commutative, the image of $\rho$ is contained either in $\{1\}\ltimes \mathbb{C}$ (a lattice of translations), or in $\mathbb{C}^*\ltimes \{0\}$ (a group of multiplications, fixing $0$ without loss of generality). In the first case, the left member of (\ref{sigma}--iv) would have to be $0$, but we asumed $K>0$. Therefore we are in the second case. Writing out that $\mathcal{V}$ is critical in the direction $\Phi T_{\gamma}$ (for $\gamma$ representing the meridian), we find \begin{equation}\label{sineratio2}{\frac{d}{d\varepsilon}}_{|\varepsilon=0}\mathcal{V}(\theta+\varepsilon\cdot\Phi(T_{\gamma}))=\log\prod_{s=1}^k\frac{\sin \theta_{j_{3s+1}}}{\sin \theta_{j_{3s+2}}}=0~,\end{equation}
where we assumed the meridinal curve $\gamma$ enters the $s$-th triangle $\tau_s$ through the edge connecting the corners $j_{3s}$ and $j_{3s+1}$, and leaves $\tau_s$ through the edge connecting the corners $j_{3s}$ and $j_{3s+2}$. Using the sine formula as above, we see that the similarity $\rho(\gamma)$ has a scaling factor of $1$. By (\ref{sigma}--iv), $\rho$ also has a rotational component of angle $K$, i.e. $\rho$ is a rotation of angle $K$. The developing map $f$ therefore induces a map $f_K$ taking $\gamma$ to a closed (Jordan) curve of $\mathbb{R}^2_K$, defined as the Euclidean plane with a $K$-singularity at the origin. Similarly, $\rho$ induces a map $\rho_K$ from $\pi_1(\partial M)\simeq \mathbb{Z}^2$ to the similarities of $\mathbb{R}^2_K$, and the meridian is in the kernel of $\rho_K$. The complete, singular metric on the solid torus $M$ is now obtained by gluing together all the ideal tetrahedra living above the triangles in the image of $f_K$ (in the singular upper half--space model above $\mathbb{R}^2_K$), adding the singular line $0\infty$, and quotienting out by the loxodromy induced by the similarity $\rho_K(\gamma')$.

Importantly, the argument reverses: \emph{if} we are given a hyperbolic metric on the solid torus $M$ with dihedral angles $\alpha$ and $K$-singular core, then the induced spun triangulation, which is well-defined, comes naturally with an angle structure $\theta$ which must satisfy (\ref{sineratio}) and (\ref{sineratio2}), i.e. $\theta$ must be critical for $\mathcal{V}$. By concavity of $\mathcal{V}$, the hyperbolic metric on $M$ is thus unique and depends continuously on the angles $\alpha$ and on $K$: when a strictly concave function changes continuously, its maximum moves around continuously. Theorem \ref{main} is proved (assuming $\Theta \neq \emptyset$) for $K>0$.

Finally, if $K=0$, then $\rho(\gamma)$ is a translation, so by commutativity the whole image of $\rho$ is a lattice of translations of $\mathbb{C}$. The union of all ideal tetrahedra, built above the triangles of the image of the developing map $f$, is already complete. Its quotient by $\text{Im}(\rho)$ has a rank-two cusp at infinity. Uniqueness of the metric is proved as before.
\end{proof}

We finish this section with two remarks: first, (\ref{alpha}) provides a parametrization of the deformation space of $M$ as both its dihedral angles $\alpha(e_i)$ \emph{and} its conical singularity angle $K$ vary, and the volume $V=\mathcal{V}(\theta^{\text{max}})$ is a concave function of all these variables. Second, a consequence of the uniqueness result above is that for every $K\geq 0$, if (\ref{sigma}) has positive solutions, so does (\ref{sigma}) with $K$ changed to $-K$ in line (iv). More particularly, the function $V$ is even in the variable $K$.
If we change (\ref{sigma}--iv) to $\sum_{s=1}^k\varepsilon_s\theta_{X_s} \geq K$ (or $\leq -K$), the volume maximizer $\theta^{\text{max}}$ under this new set of constraints still provides the unique geometric realization of $M$.

In the remainder of the paper, we suppose (\ref{alpha}) and prove that $\Theta \neq \emptyset$, i.e. we find a positive solution of (\ref{sigma}).

\section{The discrete Stokes formula} \label{stokes}
Recall the tessellation $\mathcal{O}=(V,E,F)$ of the torus $\partial M$ defined by the vertices, edges and complementary triangles of the graph $G$. Dual to this tessellation is a (topological) tessellation $\mathcal{O}'=(V',E',F')$ of $\partial M$ such that there are canonical bijections $V'\simeq F$; $E'\simeq E$; $F' \simeq V$ reversing all cell inclusions. All these bijections, as well as their inverses, will be denoted by a star in the superscript. We view $\mathcal{O}$ and $\mathcal{O}'$ as simultaneously drawn on the torus $\partial M$. There is a natural bijection $$\iota:\overrightarrow{E}'\rightarrow \{1,\dots,3m\}$$ from the set of \emph{oriented} edges of $\mathcal{O}'$ to the set $\{1,\dots,3m\}$ of corners of the triangles of $\mathcal{O}$: the corner of $\tau$ opposite $\tau'$ corresponds, by $\iota^{-1}$, to the oriented edge from the vertex ${\tau'}^*$ to the vertex $\tau^*$ of $\mathcal{O}'$. For each $j\in \{1.\dots,3m\}$, one may thus see $\iota^{-1}(j)$ as an arrow pointing towards the corner $j$ across the edge opposite $j$.

Consider an arbitrary subcomplex $\Gamma$ of $\mathcal{O}'$. Define the \emph{discrete boundary} $\Delta \Gamma$ of $\Gamma$ to be the union of all oriented edges of $\mathcal{O}'$ pointing to a vertex of $\Gamma$, but not included (after forgetting the orientation) in $\Gamma$.

\begin{lemma} \label{lemstokes}
Suppose $\theta=(\theta_1,\dots,\theta_{3m})$ is a solution of (\ref{sigma}--i--ii--iii). Then, for every subcomplex $\Gamma$ of $\mathcal{O}'$, one has 
$$\sum_{\varepsilon \in \Delta \Gamma}\theta_{\iota(\varepsilon)}= \pi\cdot \chi(\Gamma)+ \frac{1}{2}\sum_{e \in \partial \Gamma} \alpha(e^*)~.
$$
\end{lemma}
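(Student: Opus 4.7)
The plan is to prove the formula by induction on the number of cells of $\Gamma$, building $\Gamma$ up one cell at a time while preserving the subcomplex condition (every cell is added only after its boundary is already in $\Gamma$). For $\Gamma=\emptyset$ both sides vanish, so the inductive step reduces the lemma to checking that adding a single cell changes both sides by the same amount. I will read $\sum_{e\in\partial\Gamma}\alpha(e^\ast)$ with the natural multiplicity $2-f(e)$, where $f(e)\in\{0,1,2\}$ counts the $2$-cells of $\Gamma$ on either side of the edge $e$; this matches the topological intuition that an edge of $\Gamma$ lying on the boundary of no face of $\Gamma$ has \emph{two} sides exposed to the complement, and is forced by the computation below.

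\textbf{Adding a vertex $\tau^\ast$.} Three new oriented edges of $\mathcal{O}'$ point into $\Gamma$ and enter $\Delta\Gamma$; under $\iota$ they correspond to the three corners of $\tau$, so by (\ref{sigma}--ii) the LHS gains $\pi$. The RHS also gains $\pi$ from $\chi$, while $\partial\Gamma$ is unchanged. \textbf{Adding an edge $e^\ast$.} Both endpoints of $e^\ast$ are already in $\Gamma$, while both adjacent $2$-cells are not (they would have dragged $e^\ast$ in with them). The two orientations of $e^\ast$ leave $\Delta\Gamma$; under $\iota$ they correspond to the two corners opposite $e$ in the triangles adjacent to $e$ in $\mathcal{O}$, so by (\ref{sigma}--i) the LHS loses $\pi-\alpha(e)$. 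On the RHS, $\chi$ drops by $1$ (change $-\pi$) and $e^\ast$ enters $\partial\Gamma$ with multiplicity $2$ (change $+\alpha(e)$ after the factor $\tfrac12$), matching.

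\textbf{Adding a face $v^\ast$.} All vertices and edges of $\partial v^\ast$ are already in $\Gamma$, so neither $\Delta\Gamma$ nor the LHS changes. On the RHS, $\chi$ gains $1$ ($+\pi$) while each boundary edge of $v^\ast$ loses one unit of multiplicity in $\partial\Gamma$; the dual edges are precisely the edges of $\mathcal{O}$ incident to $v$, whose $\alpha$-weights sum to $2\pi$ by (\ref{alpha}--ii), for a RHS change of $\pi-\pi=0$. This completes the induction.

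The one delicate point is pinning down the meaning of $\partial\Gamma$; once the multiplicity convention is set as above, the three cases balance without friction, and the only nontrivial inputs are (\ref{sigma}--i), (\ref{sigma}--ii) and (\ref{alpha}--ii). Note that condition (\ref{sigma}--iii) is not used directly, which is consistent with its redundancy (established in the proof of Corollary \ref{gentang}) modulo (\ref{sigma}--i), (\ref{sigma}--ii) and (\ref{alpha}--ii).
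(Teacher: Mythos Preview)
Your proof is correct and follows essentially the same approach as the paper: an induction on the cells of $\Gamma$, handling vertices via (\ref{sigma}--ii), edges via (\ref{sigma}--i), and faces via (\ref{alpha}--ii). The only cosmetic difference is that the paper takes the $0$--skeleton as its base case while you start from $\Gamma=\emptyset$; your explicit discussion of the multiplicity convention for $\partial\Gamma$ matches the paper's parenthetical clarification after the statement.
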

Here, $\chi$ is the Euler characteristic, and the second sum is taken over edges $e \in E'$ which belong to the boundary (in the usual, topological sense) of $\Gamma$, counted twice if none of the two faces adjacent to $e$ belongs to $\Gamma$. (Equivalently, the second sum could be taken over pairs $(e, f) \in E' \times F'$ such that the edge $e$ belongs to $\Gamma$, the face $f$ does not, and $f$ is adjacent to $e$ --- counted twice if $f$ is adjacent to $e$ on both sides.)

\begin{proof}
We prove Lemma \ref{lemstokes} by induction on $\Gamma$. 

If $\Gamma$ is its own $0$--skeleton, consisting of $k$ vertices, then the first sum contains $3k$ terms summing to $k\pi$, by (\ref{sigma}--ii). In the right member, the sum is empty and $\chi(\Gamma)=k$, so the identity holds.

If $\Gamma$ is its own $1$--skeleton, consider the situation where an edge $e \in E'$ is added to $\Gamma$, between two pre--existing vertices of $\Gamma$. The left member then drops by $\pi-\alpha(e^*)$, by (\ref{sigma}--i). In the right member, $\chi(\Gamma)$ drops by one, and the sum contains two new terms, both equal to $\alpha(e^*)$: the relation is preserved.

Finally, for general $\Gamma$, consider the situation where a face $f$, bounded by $k$ edges already in $\Gamma$, is added to $\Gamma$. The left member is then unchanged; in the right member, $\chi(\Gamma)$ increases by one, while the sum loses $k$ terms adding up to $2\pi$ by (\ref{alpha}--ii): again, the identity is preserved.
\end{proof}
One can see Lemma \ref{lemstokes} as a sort of discrete Stokes formula, relating the Euler characteristic of $\Gamma$ to information on its boundary.

\section{Finding a positive solution}\label{nonnegative}

On the space $\widetilde{\Theta}$ of all real solutions to (\ref{sigma}), consider a point $\theta$ that minimizes $\sigma(\theta):=\frac{1}{2}\sum_{j=1}^{3m}|\theta_j|-\theta_j$, the sum of the negative parts of the $\theta_j$ (such a point exists because $\sigma$ is clearly a proper function from $\widetilde{\Theta}$ to $\mathbb{R}_+$). Among all such points, we can moreover assume that $\theta$ minimizes the number $\nu$ of indices $j$ such that $\theta_j\leq 0$.
In this section, we will show that $\nu=0$, which proves Theorem \ref{main}. Suppose $\nu>0$ and aim for a contradiction.

\begin{definition} \label{arrows}
Let $Y'\subset \overrightarrow{E}'$ be the set of all oriented edges (or ``arrows'') $\varepsilon$ of the tessellation $\mathcal{O}'$ such that $\theta_{\iota(\varepsilon)}\leq 0$. Let also $Y\subset \overrightarrow{E}$ be the set of all oriented edges of $\mathcal{O}$ obtained by rotating the oriented edges in $Y'$ by a quarter turn, counterclockwise.
\end{definition}

\begin{claim} \label{extend}
For any vertex $v$ of the tessellation $\mathcal{O}$, the set $Y$ contains an oriented edge pointing to $v$ if and only if $Y$ contains an oriented edge pointing away from $v$.
\end{claim}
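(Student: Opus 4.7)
The plan is to perturb $\theta$ by the tangent vector $\Phi T_{\gamma_v}$ of $\widetilde\Theta$ (where $\gamma_v$ is the transverse loop around $v$, one of the $\gamma_i$ whose $\Phi$-images generate $T\widetilde\Theta$ by Corollary~\ref{gentang}) and derive a contradiction with the two-step minimality of $\theta$ in case pointing-to and pointing-away arrows at $v$ are not in symmetric correspondence.

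First I would compute $\Phi T_{\gamma_v}$ explicitly. The vector $T_{\gamma_v}$ has a $+1$ at each of the $k=\deg(v)$ corners situated at $v$ itself and $0$ elsewhere. Applying $\Phi$ to each such generator $e_j$ yields $e_{j''}-e_{j'}$, where $j'$ and $j''$ are the two non-$v$ corners of the triangle $\tau\ni v$ containing $j$, listed counterclockwise. Thus $\Phi T_{\gamma_v}$ is supported on the $2k$ corners lying at the neighbours of $v$ in triangles around $v$: it takes value $+1$ at one corner (the ``counterclockwise'' one) and $-1$ at the other (the ``clockwise'' one) of each triangle containing $v$.

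Next I would match these $\pm 1$ corners with the arrows of $Y$ at $v$. An arrow of $\mathcal O$ incident to $v$ is the quarter-turn image of an arrow of $\mathcal O'$ crossing one of the edges $e_i \ni v$; via $\iota$, such $\mathcal O'$-arrows correspond bijectively to the two non-$v$ corners of the two triangles sharing $e_i$. A local check of the orientation conventions (quarter-turn counterclockwise for $Y$ versus counterclockwise ordering for $\Phi$) then shows that pointing-to-$v$ arrows of $Y$ line up with one of the two classes (say the CCW, $+1$ corners), and pointing-away arrows with the other (CW, $-1$ corners).

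Now suppose the claim fails: without loss of generality $Y$ contains an arrow pointing to $v$ but none pointing away, so some $+1$ corner has $\theta \le 0$ while every $-1$ corner has $\theta > 0$. By Corollary~\ref{gentang}, $\theta + \varepsilon\,\Phi T_{\gamma_v} \in \widetilde\Theta$ for all real $\varepsilon$. For $\varepsilon > 0$ small enough, every $-1$ corner remains strictly positive (no new non-positive coordinate is created there), every $+1$ corner is strictly increased, and no coordinate outside the star of $v$ changes. Hence either some previously strictly negative $+1$ corner becomes less negative, strictly decreasing $\sigma$ and contradicting the primary minimality of $\theta$, or else the non-positive $+1$ corners were all exactly $0$ and now become positive, strictly decreasing $\nu$ and contradicting the secondary minimality. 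The reverse implication is symmetric, perturbing by $-\varepsilon\,\Phi T_{\gamma_v}$. The main obstacle is the orientation bookkeeping linking the quarter-turn of Definition~\ref{arrows} with the counterclockwise convention used to define $\Phi$: one must verify that pointing-to and pointing-away arrows really pair with corners of \emph{opposite} sign in $\Phi T_{\gamma_v}$, not the same sign. Once this is settled, the contradiction is immediate.
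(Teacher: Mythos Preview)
Your proposal is correct and follows essentially the same approach as the paper's proof: both perturb $\theta$ along the tangent direction $\Phi T_{\gamma_v}$ attached to the vertex loop $\gamma_v$, identify the $\pm 1$ coordinates of this vector with the towards-$v$ versus away-from-$v$ arrows at $v$, and contradict the two-step minimality of $\theta$. The orientation bookkeeping you flag as the main obstacle is exactly the content of the paper's explicit labelling of left and right corners $j_1,\dots,j_{2k}$; the paper also notes (and you handle implicitly) that an edge from $v$ to itself cannot carry an arrow under the hypothesis, so such edges cause no trouble. One small imprecision: your statement that $\Phi T_{\gamma_v}$ is ``supported on the $2k$ corners lying at the neighbours of $v$'' is not literally correct when a loop edge is present (some support can sit at $v$-corners themselves), but the argument goes through unchanged since the towards/away correspondence still holds for those corners.
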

\begin{proof}
Suppose there exists a vertex $v$ with at least one incoming edge and no outgoing edge of $Y$. This means that the arrows in $Y'$, across edges adjacent to $v$, all turn counterclockwise as seen from $v$. In particular, no edge of $\mathcal{O}$ crossed by an arrow of $Y'$ connects $v$ to itself.

Any one of the $k$ triangles adjacent to $v$ has three corners, which we call the $v$--corner, the left corner, and the right corner (arising counterclockwise in this order): we may assume that the left corners are labelled $j_1,j_3,\dots,j_{2k-1}$ and the right corners are labelled $j_2,j_4,\dots, j_{2k}$, with $j_1,\dots,j_{2k}$ arising counterclockwise in that order around $v$. (Some right corners can be left corners at the same time, if there is an edge from $v$ to itself, but such an edge carries no arrow.) By assumption, all arrows in $Y'$ across edges adjacent to $v$ are then of the form $\iota^{-1}(j_{2s})$, and none are of the form $\iota^{-1}(j_{2s-1})$.

Recall the transverse path $\gamma_i$ around $v$: up to sign, the deformation $\lambda\cdot \Phi(T_{\gamma_i})$ consists in adding $\lambda$ to all the $\theta_{j_{2s}}$ and subtracting $\lambda$ from all the $\theta_{j_{2s-1}}$. But such an operation decreases either the sum $\sigma(\theta)$ of the negative parts, or (failing that) the number of nonpositive coordinates of $\theta$: this violates our assumption on $\theta$.

If there is at least one outgoing edge of $Y$ at $v$ and no incoming edge, the proof is the same up to exchanging even and odd indices.
\end{proof}

As a consequence, any oriented edge in $Y$ can be extended beyond both ends to an oriented path of arbitrary length carried by $Y \subset \overrightarrow{E}$. However, the following statement places strong restrictions on the set of oriented edges $Y$.

\begin{proposition} \label{nodisks}
Let $\beta$ be an oriented simple closed curve in the $1$--skeleton of $\mathcal{O}$, and suppose $\beta$ (with its orientation) is carried by $Y$. Then $\beta$ does not bound a disk in the solid torus $M$.
\end{proposition}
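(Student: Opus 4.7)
The plan is to suppose for contradiction that $\beta$ bounds a disk in $M$ and apply the discrete Stokes formula (Lemma \ref{lemstokes}) to a subcomplex $\Gamma\subset\mathcal{O}'$ chosen so that the discrete boundary $\Delta\Gamma$ consists precisely of the $Y'$-arrows crossing edges of $\beta$. By Definition \ref{arrows}, each edge of $\beta$ (taken with its orientation from $Y$) is the $90^{\circ}$ counterclockwise rotation of a unique $Y'$-arrow; this $Y'$-arrow crosses the edge of $\beta$ from the left-adjacent triangle to the right-adjacent one, and its target is the corner opposite the edge in the right-adjacent triangle---a ``dark corner'' with $\theta\le 0$. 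Thus, once $\Gamma$ is arranged so that $\Delta\Gamma$ isolates exactly these right-pointing $Y'$-arrows, the LHS of Stokes becomes $\sum_{\mathrm{dark}}\theta_j\le 0$, and the task is to force the RHS to be strictly positive via Rivin's conditions (\ref{alpha}).

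Since $\pi_1(M)\cong\mathbb{Z}$, either (A) $\beta$ bounds a disk $D\subset\partial M$, or (B) $\beta$ is a meridian of $M$. In Case A with $D$ on the right of $\beta$, take $\Gamma$ to be the full dual subcomplex of $D$ in $\mathcal{O}'$: then $\chi(\Gamma)=\chi(D)=1$ and $\Delta\Gamma$ is exactly the set of right-pointing $Y'$-arrows across $\beta$ (each such arrow crosses an edge of $\partial D$ into the $D$-side); Stokes therefore yields LHS $\le 0$ while RHS $\ge\pi>0$, a contradiction. In Case A with $D$ on the left of $\beta$, apply Stokes instead to $\Gamma=$ dual of the complement $R=\overline{\partial M\smallsetminus\operatorname{int}D}$, a punctured torus with $\chi(R)=-1$; again $\Delta\Gamma$ is the set of right-pointing $Y'$-arrows across $\beta$ (now pointing into $R$), so LHS $\le 0$, and RHS $=-\pi+\tfrac12 W^\alpha(\beta_R)$ for the transverse push-off $\beta_R$ of $\beta$ into $R$. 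The key observation is that $\beta_R$ bounds on its left a disk in $\partial M$ that \emph{strictly} contains $D$ together with all $|\beta|\ge 3$ vertices of $\beta$ itself (now pulled into the interior of the enlarged disk), so $\beta_R$ is not the loop around a single vertex; Rivin's condition (\ref{alpha}.iii) therefore yields $W^\alpha(\beta_R)>2\pi$ and hence RHS $>0$, again contradicting LHS $\le 0$.

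For Case B ($\beta$ a meridian), construct $\Gamma\subset\mathcal{O}'$ as the dual of a one-sided annular right-strip of $\beta$: lift to the infinite cyclic cover of $\partial M$ corresponding to $\beta$, and select the strip adjacent to only one of the two lifts of $\beta$, so that only the right-pointing $Y'$-arrows across $\beta$ survive in $\Delta\Gamma$. This arrangement gives $\chi(\Gamma)=0$ and RHS $=\tfrac12 W^\alpha(\beta_R)$ for the transverse meridian push-off $\beta_R$; Rivin's condition (\ref{alpha}.iv) provides $W^\alpha(\beta_R)>K\ge 0$, so RHS $>0$ and LHS $\le 0$ again collide.

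The main technical obstacle is executing Case B cleanly: the naive choice of $\Gamma$ (for instance the dual of the full annulus $\partial M\smallsetminus\beta$) produces a Stokes identity that is trivially satisfied, because the $Y'$-arrows and their opposites across $\beta$ both enter $\Delta\Gamma$ and their $\theta$-sums telescope via $\theta_j+\theta_{j'}=\pi-\alpha(e)$, matching the boundary $\alpha$-sum. Making a nontrivial, asymmetric choice of $\Gamma$---one that truly isolates the right-pointing $Y'$-arrows across $\beta$ without picking up their opposites---is the technical heart of the argument for meridians.
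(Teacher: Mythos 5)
Your Case A (boundary-parallel disks) is sound and coincides with the paper's argument: take $\Gamma$ to be the maximal dual subcomplex on the side of $\beta$ containing the heads of the $Y'$-arrows, note $\chi(\Gamma)=\pm 1$, and contradict Lemma \ref{lemstokes} using (\ref{alpha}--ii--iii). The gap is Case B. The $\Gamma$ you propose there --- ``the dual of a one-sided annular right-strip of $\beta$'', selected via the infinite cyclic cover --- does not exist: no subcomplex of $\mathcal{O}'$ has discrete boundary equal to just the right-pointing $Y'$-arrows across a non-separating $\beta$. Indeed, if $\Delta\Gamma$ is to contain no arrow other than ones crossing $\beta$, then any dual edge not crossing $\beta$ with one endpoint in $\Gamma$ must itself lie in $\Gamma$ (otherwise its inward orientation lies in $\Delta\Gamma$); since the face-adjacency graph of $\partial M$ cut along $\beta$ is connected, this forces $\Gamma$ to be the maximal subcomplex of $\mathcal{O}'$ in $\partial M\smallsetminus\beta$, and because $\beta$ does not separate the torus this $\Gamma$ receives the arrows across $\beta$ from \emph{both} sides, so the Stokes identity telescopes via (\ref{sigma}--i) into a tautology --- exactly the degeneration you flag at the end. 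Passing to the cover does not help: any finite strip has a second boundary, and the arrows of $\Delta\Gamma$ entering through it are reverses of $Y'$-arrows, with $\theta_{j'}=\pi-\alpha(e)-\theta_j>0$; their positive contribution exactly cancels the gain, and the inequality $\mathrm{LHS}\le 0$ is lost. So the step you defer as ``the technical heart'' is not a technicality: Lemma \ref{lemstokes} uses only (\ref{sigma}--i--ii--iii) and cannot by itself see which essential slope is the meridian, hence cannot rule out a carried meridian.

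The paper handles the compression-disk case by a different mechanism, built on the holonomy equation (\ref{sigma}--iv). One first shortens $\beta$ so that two consecutive $Y'$-arrows never point into the same triangle (if they point at corners $j,j'$ of one triangle, then $\theta_{j''}\ge\pi$ and the opposite corner $j'''$ has $\theta_{j'''}\le 0$ by (\ref{sigma}--i--ii), so $\beta$ can be rerouted). Then one runs a transverse path $\gamma_0$ through the $c=a+b$ triangles adjacent to $\beta$ on the heads' side, making $b$ Rights at the corners $B=\iota(Y'_\beta)$ (where $\sum_{j\in B}\theta_j\le 0$) and $a$ Lefts at corners $A$. The $2c$ corners $C$ opposite the edges crossed by $\gamma_0$ satisfy $\sum_{j\in C}\theta_j<c\pi-K$ by (\ref{sigma}--i) and (\ref{alpha}--iv), so (\ref{sigma}--ii) forces $\sum_{j\in A}\theta_j>K$; hence the holonomy of $\gamma_0$ exceeds $K$ in absolute value, contradicting the requirement from (\ref{sigma}--iv) that a curve bounding a compression disk have holonomy $\pm K$. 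This use of (\ref{sigma}--iv), rather than the Stokes formula, is the missing ingredient your Case B needs.
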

\begin{proof}
Let $Y'_{\beta}\subset Y'$ denote the set of arrows of $Y'$ lying transversely across edges of $\beta$. We first deal with boundary-parallel disks: if $\beta$ bounds a disk in $\partial M$, let $\Gamma$ be the maximal subcomplex of the tessellation $\mathcal{O}'$ contained in the component of $\partial M \smallsetminus \beta$ bearing the heads of the arrows of $Y'_{\beta}$. Then $\Gamma$ has the homotopy type of either a disk ($\chi=1$), or a torus minus a disk ($\chi=-1$). Moreover, we have $\Delta\Gamma=Y'_{\beta}$ by construction, so Lemma \ref{lemstokes} yields 
$$0\geq \sum_{\varepsilon \in Y'_{\beta}}\theta_{\iota(\varepsilon)}= \pi\cdot \chi(\Gamma)+\frac{1}{2}\sum_{e \in \partial \Gamma} \alpha(e^*)>0$$
(absurd!) where the right inequality comes from (\ref{alpha}--ii--iii) and the fact that $\partial \Gamma$ cannot be just the loop around a vertex if $\chi(\Gamma)=-1$: indeed, $\partial M \smallsetminus \Gamma$ should at least contain the edges of $\beta$.

Suppose now that $\beta$ bounds a compression disk of the solid torus $M$. If $\beta$ has length $b$ then $\beta$, being simple, has $b$ distinct edges and $b$ distinct vertices. We may assume that two arrows of $Y'_{\beta}$ lying across two consecutive edges of $\beta$ never point into the same triangle of $\mathcal{O}$: indeed, if two such arrows point towards the corners $j$ and $j'$ of a certain triangle, then $\theta_j\leq 0,$ and $\theta_{j'}\leq 0$, so the third corner $j''$ must satisfy $\theta_{j''}\geq \pi$ by (\ref{sigma}--ii); therefore the corner $j'''$, opposite $j''$ in the neighboring triangle, satisfies $\theta_{j'''}\leq 0$ by (\ref{sigma}--i). In other words, we can shorten $\beta$ by one, bypassing the two arrows to $j,j'$ and replacing them by the arrow to $j'''$.

Consider now the triangles immediately adjacent to $\beta$, on the side of the heads of the transverse arrows. These triangles fall into two families: $b$ of them intersect $\beta$ along a single edge, while some other number $a$ of them intersect $\beta$ only at a vertex. There is a transverse path $\gamma_0$ of length $c:=a+b$ making $b$ rights and $a$ lefts, and crossing precisely these triangles. The $3c$ corners of these $c$ triangles fall into three families:
\begin{itemize}
\item The $b$ corners opposite $\beta$ (i.e. the corners near which $\gamma_0$ makes Rights) form the first family, $B=\iota(Y'_{\beta})$.
\item The $a$ corners near which $\gamma_0$ makes Lefts form the second family, $A$.
\item The $2c$ remaining corners, the ones opposite the $c$ edges crossed by $\gamma_0$, form the third family, $C$.
\end{itemize}
We have the following properties:
\begin{equation} \label{equator}
\begin{array}{ccll}
\displaystyle{\sum_{j\in B} \theta_j} &\leq& 0&\text{by definition of $Y'_{\beta}$;} \\
\displaystyle{\sum_{j\in C} \theta_j} &<& c\pi-K &\text{by (\ref{sigma}--i) and Rivin's condition (\ref{alpha}--iv);} \\
\displaystyle{\sum_{j\in A\sqcup B\sqcup C} \theta_j} &=& c\pi&\text{by (\ref{sigma}--ii).}
\end{array}
\end{equation}
This immediately implies $\sum_{j\in A}\theta_j >K$. Hence, the holonomy of $\gamma_0$ is, up to sign, $\sum_{j\in A} \theta_j - \sum_{j\in B} \theta_j >K$: but if $\gamma_0$ bounds a compression disk, its holonomy must be $\pm K$ by (\ref{sigma}--iv). Therefore $\gamma_0$, and similarly $\beta$, does not bound a compression disk. (In fact, the same argument also works for the boundary-parallel disks that we treated first, with $2\pi$ instead of $K$.)
\end{proof}

We now convert the set $Y$ of oriented edges in $M$, into a train track, as follows.
At every vertex $v$ of the unoriented support $|Y|$ of $Y$, there are at least one incoming and one outgoing edge of $Y$. If there are only two edges total, we just consider $v$ as a generic point of the train track. If there are three or more edges, then there exists an integer $k_v \geq 1$ such that the (half--) edges of $Y$ adjacent to $v$ fall into $k_v$ maximal families of consecutive incoming edges, and $k_v$ maximal families of consecutive outgoing edges (\emph{consecutive} is with reference to the cyclic order around $v$, restricted to $|Y|$). We then replace $v$ with a spiky $(2k_v)$-gon, and attach each of the aforementioned families of $s$ consecutive edges to one of the vertices of the $(2k_v)$-gon, turning that vertex into a $2$-to-$s$ (or $s$-to-$2$) switch of the train track. Note that the train track, noted $\mathcal{T}$, inherits an orientation from $Y$.

\begin{proposition} \label{bigons}
If $\mathcal{T}\neq \emptyset$, then all complementary regions of $\mathcal{T}$ are either bigons or annuli with smooth boundary. In particular, all the integers $k_v$ are equal to $1$.
\end{proposition}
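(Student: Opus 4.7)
The plan is to combine three ingredients: a parity constraint on cusps, Proposition~\ref{nodisks} to rule out smooth disks, and the standard Gauss--Bonnet formula for train tracks on the torus $\partial M$.

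First I would observe that at every switch $p$ of $\mathcal{T}$ (a polygon vertex), the external family of $s$ edges of $Y$ meeting $p$ is homogeneously oriented --- either all pointing into $p$ or all pointing out --- while the two polygon edges on the opposite side of $p$ are coherently oriented the other way, by construction of the $(2k_v)$-gon. Hence along the common tangent line at $p$, all branches flow through $p$ in a single direction. Tracing $\partial R$ for any complementary region $R$, the $Y$-orientation is then preserved across each switch and reverses at each cusp; since $\partial R$ is a union of closed curves, $c(R)$ must be even, which in particular rules out all monogons ($c=1$).

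The second step is to exclude smooth disks ($\chi=1,\ c=0$). For such an $R$, the coherence argument above shows $\partial R$ is a smoothly immersed closed curve on $\mathcal{T}$ with globally consistent $Y$-orientation; at each switch on $\partial R$, smoothness forces the use of exactly one polygon edge paired with one external edge (using both polygon edges at a switch would create a cusp). Collapsing each polygon of $\mathcal{T}$ back to its original vertex of $|Y|$ turns $\partial R$ into a $Y$-carried closed curve $\beta$ in the $1$-skeleton of $\mathcal{O}$. If $\beta$ is simple, Proposition~\ref{nodisks} gives an immediate contradiction since $\beta$ bounds $R \subset \partial M \subset M$. Otherwise I would split $\beta$ at each vertex of $|Y|$ it visits more than once into simple closed $Y$-carried sub-curves; at least one such sub-curve bounds a sub-disk of $R$, and Proposition~\ref{nodisks} applied to it yields the same contradiction.

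Finally, I invoke the Gauss--Bonnet formula for train tracks, $\sum_R (\chi(R) - \tfrac{1}{2} c(R)) = \chi(\partial M) = 0$, the sum running over all complementary regions. On the torus, every complementary region is a disk or an annulus; by the two previous steps, each disk region has $c(R) \geq 2$, so every term in the sum is $\leq 0$. Equality forces every index to vanish, so every disk is a bigon and every annulus is smooth. In particular, a vertex $v$ with $k_v \geq 2$ would give a $(2 k_v)$-cusped disk (the polygon interior at $v$) of index $1 - k_v \leq -1$, an impossibility; hence $k_v = 1$ for every $v \in |Y|$. The main obstacle I anticipate is the splitting argument in the second step: one must verify that at least one of the simple $Y$-carried sub-curves produced by splitting $\beta$ bounds a genuine sub-disk of $R$, which requires careful tracking of how the embedded $\partial R$ decomposes $R$ under the polygon collapse.
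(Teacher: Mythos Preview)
Your argument is essentially the paper's: the orientation of $\mathcal{T}$ forces the number of cusps of each region to be even, Proposition~\ref{nodisks} rules out smooth $0$-gons, and the index identity $\sum_R(\chi(R)-\tfrac12 c(R))=\chi(\partial M)=0$ then forces every index to vanish, hence every region is a bigon or a smooth annulus.

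One correction is needed in your third step. The assertion ``on the torus, every complementary region is a disk or an annulus'' is not true a priori: nothing yet prevents a pair of pants or a once-punctured torus from occurring. Fortunately this does not damage the argument, since any such region has $\chi(R)\le -1$ and therefore $\chi(R)-\tfrac12 c(R)\le -1<0$ automatically; once all indices are shown to vanish you recover $\chi(R)=c(R)/2\ge 0$, so $\chi(R)\in\{0,1\}$ and the classification follows. Simply drop the unjustified claim and treat these extra cases alongside the $c\ge 2$ disks.

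Your second step is more explicit than the paper, which just cites Proposition~\ref{nodisks} without discussing the collapse of the $(2k_v)$-gons or the simplicity of the resulting curve $\beta$. Your concern is legitimate, and the innermost argument you propose works: among all pairs of points on $\partial D$ that become identified after collapse, choose one bounding an arc of $\partial D$ with no further identifications on it; that arc maps to a simple $Y$-carried cycle, and together with a chord of $D$ it bounds an embedded subdisk of $\bar R\subset\partial M$, to which Proposition~\ref{nodisks} applies.
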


\begin{proof}
Each complementary region $R_i$ of $\mathcal{T}$ has a natural Euler characteristic $\chi_i$, equal to its topological Euler characteristic, minus half its number of vertices (spikes): for example, an $s$-gon gets $\chi_i=1-s/2$. These $\chi_i$ must add up to $\chi(\partial M)=0$. However, the number of spikes of any region is always even, because $\mathcal{T}$ is oriented. Therefore the only possible case where $\chi_i>0$ is when $R_i$ is a smooth disk ($0$-gon), but that is ruled out by Proposition \ref{nodisks}. It follows that $\chi_i=0$ for all regions $R_i$: the regions are either smooth annuli or bigons.
\end{proof}

Since all the integers $k_v$ are $1$, we alter the train track $\mathcal{T}$ again by collapsing all the bigons coming from vertices $v$ back to points. The oriented train track $\mathcal{T}$ is now again isotopic to the union of the original set of oriented edges $Y$, only smoothened near the vertices.

\begin{proposition} \label{noreeb}
The oriented train track $\mathcal{T}$ does not carry two parallel curves with opposite orientations (i.e. does not have any ``Reeb components'').
\end{proposition}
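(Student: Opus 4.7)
The plan is to argue by contradiction using Lemma~\ref{lemstokes} applied to the Reeb annulus itself, forcing a Rivin--style violation of~(\ref{alpha}--i).

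Assume $\mathcal{T}$ carries two parallel oriented simple closed curves $\beta_1,\beta_2$ with opposite orientations, cobounding an annulus $A\subset\partial M$. Through the identification of $\mathcal{T}$ with the smoothened $Y$, each $\beta_i$ is an oriented closed path in $Y\subset\overrightarrow{E}$, and each is essential in $\partial M$ by Proposition~\ref{nodisks}. The ``opposite orientations'' condition ensures that the $Y'$--arrows across the edges of $\beta_1$ and of $\beta_2$ all point to the same side of $\beta_1\cup\beta_2$; up to exchanging $A$ with the other annular component of $\partial M\smallsetminus(\beta_1\cup\beta_2)$, I may take this side to be $A$ itself, so that $\Delta\Gamma=Y'_{\beta_1}\sqcup Y'_{\beta_2}\subset Y'$ when $\Gamma$ is the maximal subcomplex of $\mathcal{O}'$ contained in $A$.

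First I would apply Lemma~\ref{lemstokes} to this $\Gamma$. Since $|\Gamma|$ is homotopy equivalent to the annulus $A$, we have $\chi(\Gamma)=0$; moreover the left-hand side of Lemma~\ref{lemstokes} is $\sum_{\varepsilon\in\Delta\Gamma}\theta_{\iota(\varepsilon)}\leq 0$ while the right-hand side is $\tfrac12\sum_{e\in\partial\Gamma}\alpha(e^*)\geq 0$. Both sides must therefore vanish; in particular $\alpha(e^*)=0$ for every edge of $\mathcal{O}$ in $\mathrm{int}(A)$ that is incident to $\beta_1\cup\beta_2$.

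Next I would choose a transverse path $\gamma_0\subset\mathrm{int}(A)$ running parallel and very close to $\beta_1$, namely the outer boundary of a thin regular neighborhood of $\beta_1$ in $A$, chosen small enough to avoid every vertex of $V\cap\mathrm{int}(A)$. Then $\gamma_0$ lies entirely in the closed star of $\beta_1$ in $A$, and in every triangle it visits the two edges crossed by $\gamma_0$ are incident to some vertex of $\beta_1$ --- either the two sides of that triangle meeting the endpoints of its $\beta_1$--edge, or (if the triangle meets $\beta_1$ only at a single vertex $v$) the two sides of the triangle at $v$. In both cases these edges have $\alpha$--weight zero by the previous step, so $W^\alpha(\gamma_0)=0$, contradicting~(\ref{alpha}--i).

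The main obstacle is the combinatorial verification that the thin neighborhood can indeed be routed so that $\gamma_0$ crosses only edges incident to $\beta_1$; this is the case analysis on the triangles of the closed star of $\beta_1$ described above, and it is what makes the zero $\alpha$--weight conclusion possible.
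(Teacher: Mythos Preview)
Your argument is correct and follows the paper's approach: apply the discrete Stokes formula (Lemma~\ref{lemstokes}) to the maximal dual subcomplex $\Gamma$ inside the Reeb annulus $A$, use $\chi(\Gamma)=0$ together with $\Delta\Gamma\subset Y'$ to force $\sum_{e\in\partial\Gamma}\alpha(e^*)\le 0$, and derive a contradiction with~(\ref{alpha}--i). The paper's version is terser in the final step --- since $\partial\Gamma$ is already a closed $1$--cycle in $\mathcal{O}'$, it \emph{is} (dually) a transverse path with $W^\alpha\le 0$, so your explicit push--off $\gamma_0$ of $\beta_1$, while correct, is simply a concrete realization of one component of that boundary and not an additional ingredient.
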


\begin{proof}
If there is a Reeb component bounded by two parallel curves $\gamma_0$ and $\gamma_1$ of $\mathcal{T}$, then $\gamma_0 \cup \gamma_1$ separates the torus $\partial M$ into two annuli, one of which (say $A$) contains all the heads of the arrows of $Y'$ across $\gamma_0$ and $\gamma_1$. Let $\Gamma$ be the maximal subcomplex of the dual tessellation $\mathcal{O}'$ contained in $A$: since $A$ retracts to $\Gamma$, we have $\chi(\Gamma)=0$. Recall the discrete Stokes formula: since $\Delta\Gamma$ is precisely the set of all arrows across $\gamma_0$ and $\gamma_1$, Lemma \ref{lemstokes} gives $\sum_{e\in \partial \Gamma}\alpha(e^*)\leq 0$. This is clearly inconsistent with (\ref{alpha}--i): the sum should be positive. As a consequence, there can be no Reeb components in $\mathcal{T}$. 
\end{proof}

Propositions \ref{nodisks}, \ref{bigons}, \ref{noreeb} put very strong constraints on $\mathcal{T}$. In fact,

\begin{lemma} \label{traintracks}
Consider an oriented train track $\mathcal{T}$ on the smooth torus such that all complementary regions are either bigons or smooth annuli, and $\mathcal{T}$ has no Reeb components. Then, up to isotopy, any oriented simple closed curve $c$ in the torus is either carried by $\mathcal{T}$ (possibly after reversing its orientation), or intersects $\mathcal{T}$ with the same sign everywhere. 
\end{lemma}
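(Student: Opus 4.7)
The approach is to put $c$ in minimal position with $\mathcal{T}$ (so no bigon is formed between $c$ and $\mathcal{T}$) and then analyze the signs of consecutive crossings using the oriented structure of the complementary regions of $\mathcal{T}$. If $c \cap \mathcal{T} = \emptyset$, then $c$ lies inside a single such region. A bigon is a disk and cannot contain an essential simple closed curve, so $c$ lies in one of the smooth annuli and therefore is isotopic to a core. Since the core is isotopic to either boundary circle of the annulus, and these boundaries are closed train-paths on $\mathcal{T}$, the curve $c$ is carried by $\mathcal{T}$, possibly after reversing its orientation to agree with the direction of the boundary.

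If $c$ does meet $\mathcal{T}$, consider two consecutive crossings along $c$; between them $c$ traverses some complementary region $R$. When $R$ is a bigon, its two boundary branches meet at two cusps located at switches of $\mathcal{T}$. At each cusp, the oriented-train-track switch condition forces the two cusp-side branches to be either both incoming or both outgoing at that switch. A compatibility check at the two cusps rules out the configurations ``both outgoing at both cusps'' and ``both incoming at both cusps'' (each of which would force a boundary branch to be oriented both ways at once), leaving exactly the two possibilities in which both boundary branches are oriented from one cusp to the other in the same direction. Hence $c$ crosses two parallel-oriented arcs, and the two crossings carry the same sign.

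When $R$ is a smooth annulus, its two boundary circles are closed oriented train-paths, and the no-Reeb-components hypothesis forces them to be oriented consistently (they represent the same positive multiple of a common primitive homology class). In minimal position, the arc of $c$ across $R$ must have its endpoints on distinct boundary circles of $R$: otherwise, lift everything to the universal cover $\mathbb{R}^2$, where $R$ lifts to an infinite strip bounded by two parallel lines $\ell, \ell'$ (lifts of the two boundary circles of $R$); the corresponding lifted arc of $\tilde c$ would have both endpoints on $\ell$, say, and taking an innermost such arc gives a disk in the strip bounded by the arc and a sub-arc of $\ell$, whose interior is free of $\mathcal{T}$ and of other arcs of $\tilde c$, whose equivariant translates descend to a bigon-isotopy of $c$ strictly decreasing $|c \cap \mathcal{T}|$, contradicting minimality. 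Hence $c$ crosses from one boundary of $R$ to the other, and again the two crossings share a sign.

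Combining the two cases, every pair of consecutive crossings along $c$ has the same sign, and thus so do all crossings. The most delicate step is the minimal-position argument in the annulus case, where one must account for the possibility that a lifted arc of $c$ winds around its strip and verify that the innermost-disk isotopy constructed in the cover descends equivariantly to a simplifying isotopy of $c$ on the torus.
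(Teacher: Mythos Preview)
Your approach is genuinely different from the paper's. The paper argues globally: it constructs two extremal carried curves $C_r$ and $C_l$ (obtained by following $\mathcal{T}$ and always turning right, resp.\ left, at switches), shows that each is simultaneously carried and transverse (on one side), and then uses additivity of the properties ``carried'' and ``right/left-transverse'' under homological sum to cover every slope in $H_1(T^2)$. Your approach is local: you examine consecutive crossings of $c$ with $\mathcal{T}$ and check that the signs agree, using the orientation structure of the complementary regions. Your analysis of the bigon regions (forcing both sides to be oriented from one cusp to the other) is correct and pleasant, and the annulus case is morally right as well.

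However, there is a genuine gap in your minimal-position step. You assert that when $|c\cap\mathcal{T}|$ is minimal there is no embedded bigon between $c$ and a smooth arc of $\mathcal{T}$, and hence that every arc of $c$ in a complementary region $R$ crosses from one side (or boundary circle) to the other. But the innermost-bigon isotopy you invoke does not obviously reduce $|c\cap\mathcal{T}|$. The bigon $D$ you find has $\partial D=\alpha\cup\beta$ with $\alpha\subset c$ and $\beta$ a smooth arc lying in $\partial R\subset\mathcal{T}$; its interior is indeed free of $\mathcal{T}$. But $\beta$ may contain switch points where \emph{other} branches of $\mathcal{T}$ come off on the far side of $\partial R$. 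Pushing $\alpha$ across $D$ and past $\beta$ removes the two endpoints of $\alpha$ from $c\cap\mathcal{T}$, yet creates one new intersection for each such branch. If $\beta$ carries two or more switches, $|c\cap\mathcal{T}|$ does not decrease, and your contradiction with minimality fails. (The covering argument you give for the annulus case correctly shows the innermost bigon embeds in the torus, but does not address this issue.)

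The gap is repairable, but not by the sentence you wrote. One clean fix is to first observe that the boundary circles of all annular regions are disjoint essential simple closed curves on the torus, hence mutually parallel of a single slope $\lambda$; then put $c$ in minimal position with this multicurve (so every arc of $c$ in an annular region goes across), and separately treat the pieces of $\mathcal{T}$ lying in the remaining complementary annuli. Alternatively, one can thicken $\mathcal{T}$ to an oriented singular foliation (filling each bigon by arcs parallel to its sides and each annulus by parallel circles, which is exactly where your bigon-orientation and no-Reeb analyses are used) and make $c$ transverse to that. Either route requires real additional argument beyond ``minimize $|c\cap\mathcal{T}|$''.
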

(The statement about the signs of the intersections just means, in this context, that $c$ never enters and leaves a bigon, or annulus, through the same edge: we will say that $c$ is \emph{transverse}.)

\begin{proof}
The following proof will actually give more: namely, in the first homology $\mathbb{Z}^2$ of the torus, the directions that are carried form a centrally symmetric cone (possibly reduced to a line) and all other directions are transverse. Moreover, the two extremal directions of the cone are obtained by following $\mathcal{T}$ and making rights (resp. lefts) at every switch of the train track.

Let us first refine the definitions: a smooth oriented curve $c$ in the torus is
\begin{itemize}
\item plus-carried (resp. minus-carried) if $c$ (resp. $c$ with reversed orientation) follows the oriented train track $\mathcal{T}$;
\item right-transverse (resp. left-transverse) if $c$ is transverse to $\mathcal{T}$ and, at each intersection, travels to the right (resp. left) if the direction of $\mathcal{T}$ is ``up''. Every transverse curve is either left-transverse or right-transverse.
\end{itemize}

We will say that a first homology class is carried, transverse, etc... whenever it is represented by a curve with that property. Given two plus-carried curves $c_1, c_2$, the first homology class $[c_1]+[c_2]$ is also plus-carried, as well as all its integer submultiples, by an easy surgery argument on $c_1$ and $c_2$. Similarly, ``minus-carried'' is also a stable property under homological sum. ``Right-transverse'' is also stable under homological sum (still by surgery, because all right-transverse curves enter any given bigon or annulus through the same edge) as well as ``left-transverse''.

Starting from any branch of $\mathcal{T}$, follow it while always turning right at the switches: after finite time this process converges to a cycle of $\mathcal{T}$ that we call $C_r$. Similarly, by always turning left at the switches, one converges to another cycle $C_l$. (It would be easy to check that while the curves $C_r, C_l$ may depend on the initial branch chosen, their slopes do not.) Clearly, $[C_r]$ and $[C_l]$ are plus-carried. Moreover, $[C_r]$ is also right-transverse: to see this, first notice that all branches of $\mathcal{T}$ incident to $C_r$ on the right are coming into (not out of) $C_r$; then push $C_r$ slightly to the right. Similarly, $[C_l]$ is left-transverse.

\smallskip

First, suppose $C_r$ and $C_l$ are not parallel, but intersect once or more essentially. Since $[C_r]$ and $[C_l]$ are both plus-carried, all (integer submultiples of) nonzero classes of the form $m[C_r]+n[C_l]$, with $m,n\geq 0$, are plus-carried. Their negatives are minus-carried. Since $[C_r]$ and $-[C_l]$ are both right-transverse, all (integer submultiples of) nonzero classes of the form $m[C_r]+n[C_l]$, with $m\geq 0\geq n$, are right-transverse. Their negatives are left-transverse. This covers all classes.

\smallskip

Next, suppose $C_r$ and $C_l$ have the same slope $\lambda$ (and therefore the same orientation, by Proposition \ref{noreeb}). The plus-carried slope $\lambda=[C_r]=[C_l]$ is both right-transverse and left-transverse. So is the minus-carried slope $-\lambda$. We shall prove that all other slopes are transverse.

First consider the case when the complement of the train track $\mathcal{T}$ contains an annulus $\mathcal{A}$ (also of slope $\lambda$).  By additivity, it is clearly enough to show that there exists a transverse arc connecting the two boundary curves of the complement $\mathcal{B}$ of $\mathcal{A}$ in the torus (this arc can then be closed up across $\mathcal{A}$, staying transverse). We can assume that inside $\mathcal{B}$, all complementary components of $\mathcal{T}$ are bigons: should there be any annuli, just repeat the argument. We will prove by induction on the number of branches of the train track on $\mathcal{B}$ that there is a transverse path across $\mathcal{B}$ starting from \emph{any} regular point $p$ of $\partial \mathcal{B}$. 

Note that $\mathcal{B}$ is topologically an annulus, possibly collapsed to its core curve in some places (so the initial case of the induction, where $\mathcal{B}$ is reduced to a circle, is trivial). The orientations on the two boundary curves of $\mathcal{B}$ (possibly merged with the core in some places) are the same. We can furthermore assume that $\mathcal{B}$ carries no other closed curve than these two boundaries: if $\mathcal{B}$ carries another closed curve $c$, split $\mathcal{B}$ along $c$ into two (possibly even more collapsed) annuli and use induction. We refer to the two boundary curves of $\mathcal{B}$ as the \emph{right} and \emph{left} one, with respect to the orientation of the train track $\mathcal{T}$.

Consider a boundary curve $c$ of $\mathcal{B}$. There is at least one arc of $\mathcal{T}$ leaving $c$ into $\mathcal{B}$, or reaching $c$ out of $\mathcal{B}$ (otherwise $\mathcal{B}$ contains an annulus next to $c$). Moreover, these two types of arcs cannot coexist, otherwise $\mathcal{B}$ carries a nonboundary curve (or $\mathcal{B}$ is just one bigon with an arc connecting the vertices, in which case we are done). Up to switching the orientation, we can therefore assume that all arcs of $\mathcal{B}$ incident on $c$, leave $c$ into $\mathcal{B}$ (to the right, if $c$ is the left boundary curve) for the orientation of the train track.

Starting from any regular point $p$ of the boundary curve $c$, construct an arc $\alpha$ by following the train track and making rights at every switch, until $\alpha$ becomes trapped in the cycle $C_r$. The closed curve $C_r$ can only be the  other boundary curve, $c'$ (because $\alpha$ cannot return to $c$). Cut off $\alpha$ the moment it reaches $c'$. On the right side of $\alpha$, all incident branches come into (not out of) $\alpha$, by construction. Therefore we can push $\alpha$ slightly to its right, with initial point $p$ fixed, to obtain a right-transverse curve from $p$ to $c'$. This finishes the proof when the complement of the oriented train track $\mathcal{T}$ contains an annulus.

\smallskip

Still assuming $[C_r]=[C_l]=\lambda$, suppose now that all complementary components of $\mathcal{T}$ are bigons.

There is a (possibly collapsed) annulus $\mathcal{B}$ with $C_r$ as its left boundary and $C_l$ as its right boundary. Assume $\mathcal{B}$ is nowhere collapsed. All branches inside $\mathcal{B}$ that are incident to $C_r$ (resp. $C_l$) come into (not out of) $C_r$ (resp. $C_l$). By the annulus construction above, there exists a right-transverse arc $\alpha$ across $\mathcal{B}$, from $C_r$ to $C_l$, and $\alpha$ can be extended across the complement $\mathcal{B}'$ of $\mathcal{B}$ (also an annulus) so as to obtain a right-transverse arc $\alpha$ from $C_r$ back to $C_r$. We can further extend $\alpha$ beyond its endpoint by following $C_r$ \emph{along the orientation of $\mathcal{T}$} till the starting point of $\alpha$. This produces a simple closed curve, still denoted $\alpha$, such that $([\alpha], [C_r])$ forms a $\mathbb{Z}$-basis of the first homology of the torus $\partial M$. By pushing the portion $\alpha \cap C_r$ of $\alpha$ slightly to the right, into the annulus $\mathcal{B}$, we make $\alpha$ completely right-transverse. (If $\mathcal{B}$ is collapsed, then $\mathcal{B}$ is in fact reduced to the oriented cycle $C_r=C_l$ with only incoming branches on both sides, and the construction is essentially the same, crossing only the annulus $\mathcal{B}'$.)

Since $\lambda$ and $-\lambda$ are both right-transverse, we obtain that all slopes of the form $m[\alpha]+n\lambda$ with $m>0$ and $n\in \mathbb{Z}$ are right-transverse. Their negatives are all left-transverse, and this covers all slopes. Lemma \ref{traintracks} is proved.
\end{proof}

We can now finish the proof of Theorem \ref{main}. Recall the point $\theta \in \widetilde{\Theta}$ which minimizes (alphanumerically) the sum of the negative parts of the $\theta_j$ and the number $\nu=|Y'|=\text{Card}\,\{j\in \{1,\dots,3m\}~|~ \theta_j\leq 0\}$. If $\nu>0$, use Definition \ref{arrows} and Claim \ref{extend} to construct the (non-empty) oriented train track $\mathcal{T}$, carried by the graph $G\subset \partial M$, as above. By Lemma \ref{traintracks}, the homotopy class $\mu$ of the meridian of the solid torus $M$ is either carried by $\mathcal{T}$, or transverse to $\mathcal{T}$. But Proposition \ref{nodisks} implies $\mu$ cannot be carried because it bounds a disk. Therefore, $\mu$ is transverse and essentially intersects $\mathcal{T}$ (if $\mu$ were homotopically disjoint from $\mathcal{T}$, it would also be carried). We can therefore find a transverse curve $\gamma$ in the class $\mu$, which always intersects $\mathcal{T}$ with the same sign. Up to reversing direction, $\gamma$ always crosses $\mathcal{T}$ in the same direction as the arrows of $Y'$. Corollary \ref{gentang} says that $\theta+\varepsilon \cdot \Phi (T_\gamma)$ still satisfies (\ref{sigma}). By Observation \ref{recount}, the nonpositive coordinates of $\theta$ (i.e. the $\theta_j$ with $j\in \iota(Y')$) are increased by such a deformation for small positive $\varepsilon$, exactly as in the proof of Claim \ref{extend}. This is contrary to our assumption on $\theta$. Therefore $\nu=0$. Theorem \ref{main} is proved.


\begin{flushright}
Fran\c{c}ois Gu\'eritaud \\
CNRS et Universit\'e de Lille--1 \\
Laboratoire de math\'ematiques Paul--Painlev\'e \\
UMR 8524 du CNRS \\
59655 Villeneuve d'Ascq Cedex, France
\end{flushright}


\begin{thebibliography}{MaR}

\bibitem[An]{andreev} E.M. Andreev, \emph{On convex polyhedra of finite volume in Loba\v{c}evskii space}, Math. USSR-Sb. {\bf 12}--2
(1970), 255--259 [Mat. Sbornik {\bf 83} (1970), 256--260].

\bibitem[BB]{bonahon-bao} Xiliang Bao, Francis Bonahon, \emph{Hyperideal polyhedra in hyperbolic 3-space}, Bull. Soc. Math. France {\bf 130}--3 (2002), 457--491. 

\bibitem[BO]{bonahon-otal} Francis Bonahon, Jean-Pierre Otal, \emph{Laminations mesur\'ees de plissage des vari\'et\'es hyperboliques de dimension 3}, Ann. of Math. {\bf 160} (2004), 1013--1055.

\bibitem[CH]{chan-hodgson} Ken Chan, \emph{Constructing hyperbolic 3-manifolds}, undergraduate thesis with Craig Hodgson, University of Melbourne (2002).

\bibitem[GF]{gueritaud-futer} Fran\c{c}ois Gu\'eritaud, with an Appendix by David Futer, \emph{On canonical triangulations of once-punctured torus bundles and two-bridge link complements}, Geometry and Topology {\bf 10} (2006), 1239--1284.

\bibitem[GS]{dehn} Fran\c{c}ois Gu\'eritaud and Saul Schleimer, \emph{Canonical triangulations of Dehn fillings}, Geometry and Topology {\bf 14} (2010), 193--242.

\bibitem[Gu]{gueritaud-polyhedra} Fran\c{c}ois Gu\'eritaud, \emph{On an Elementary Proof of Rivin's Characterization of Convex Ideal Hyperbolic Polyhedra by
their Dihedral Angles}, Geometriae Dedicata {\bf 108}--1 (2004), 111--124.

\bibitem[Lu]{luo} Feng Luo, \emph{Variational principles on triangulated surfaces}, Handbook of geometric analysis No. 1, 259--276, Adv. Lect. Math. (ALM), 7, Int. Press, Somerville, MA, 2008. 

\bibitem[Mi]{lobatchevski} John Milnor, \emph{Hyperbolic geometry : the first 150 years}, Bull. Amer. Math. Soc. {\bf 6}--1 (1982), 9--24.

\bibitem[NZ]{neumann-zagier} Walter Neumann, Don Zagier, \emph{Volume of hyperbolic 3-manifolds}, Topology {\bf 24} (1985), 307--332.

\bibitem[R1]{rivin-1} Igor Rivin, \emph{Euclidean structures on simplicial surfaces and hyperbolic volume}, Ann. of Math. (2) {\bf 139} (1994), 553--580.


\bibitem[R3]{rivin-optimization} Igor Rivin, \emph{Combinatorial optimization in geometry}, Adv. in Appl. Math. {\bf 31}--1 (2003), 242--271.

\bibitem[Sch]{schlenker} Jean-Marc Schlenker, \emph{Hyperbolic manifolds with polyhedral boundary}, electronic preprint, arXiv :math.GT/0111136, 2001.


\end{thebibliography}
\end{document}